\newcommand{\R}{{\mathbb R}}
\newcommand{\N}{{\mathbb N}}
\newcommand{\Rd}{{\R}^d}
\newcommand{\Comp}{\mathrm{K}(\Rd)}
\newcommand{\inK}{\in\Comp}
\newcommand{\haus}{\mathrm{haus}}
\newcommand{\dist}{\mathrm{dist}}
\newcommand{\CH}{{\mathrm{CH}}}
\newcommand{\Graph}{{\mathrm{Graph}}}
\newcommand{\PrjXonY}[2]{\Pi_{#2}{(#1)} }
\newcommand{\Pair}[2]{\Pi \big ( {#1},{#2} \big )}
\newcommand{\Var}[3]{V_{#1}^{#2}(#3) }
\newcommand{\BV}{\mathrm{BV}[a,b]}
\newcommand{\calF}{\cal{F}[a,b]}
\newcommand{\LeftLocalModul}[3]{\omega^{-}\big( {#1},{#2},{#3} \big)}
\newcommand{\NewLeftLocalModul}[3]{\varpi^{-}\big( {#1},{#2},{#3} \big)}
\newcommand{\NewRightLocalModul}[3]{\varpi^{+}\big( {#1},{#2},{#3} \big)}
\newcommand{\LocalModulCont}[3]{\omega\big( {#1},{#2},{#3} \big)}
\newcommand{\setMS}{\mathcal{S}(F)}
\newcommand{\setMSof}[1]{\mathcal{S} ({#1})}
\newcommand{\SymbText}[1]{\hbox to 10cm { \dotfill #1 \dotfill}}
\newtheorem{remark}{Remark}[section]
\newtheorem{theorem}[remark]{Theorem}
\newtheorem{propos}[remark]{Proposition}
\newtheorem{corol}[remark]{Corollary}
\newtheorem{lemma}[remark]{Lemma}
\newtheorem{result}[remark]{Result}
\newtheorem{defin}[remark]{Definition}
\begin{document}

\title {The pointwise limit of metric integral operators\\approximating set-valued functions }%

\renewcommand{\thefootnote}{\fnsymbol{footnote}}
\author{
Elena E. Berdysheva \footnotemark[1], \quad
Nira Dyn \footnotemark[2], \quad
Elza Farkhi \footnotemark[2]\;\quad
Alona Mokhov \footnotemark[4]\;
}
\footnotetext[1]{ University of Cape Town, South Africa}
\footnotetext[2]{Tel-Aviv University, School of Mathematical Sciences}
\footnotetext[4]{Afeka, Tel-Aviv Academic College of Engineering}
\date{}
\maketitle

\medskip

{ \small {\normalsize \textbf {Abstract.}  For set-valued functions  (SVFs, multifunctions), mapping a compact interval $[a,b]$ into the space of compact non-empty subsets of $\R^d$, we study approximation based on the metric approach that includes metric linear combinations, metric selections and weighted metric integrals. In our earlier papers we considered convergence of metric Fourier approximations and metric adaptations of some classical integral approximating operators for SVFs of bounded variation with compact graphs. While the pointwise limit of a sequence of these  approximants at a point of continuity $x$ of the set-valued function $F$ is $F(x)$, the limit set at a jump point was earlier described in terms of the metric selections of the multifunction. Here we show that, under certain assumptions on $F$, the limit set at $x$ equals the metric average of the left and the right limits of $F$ at $x$, thus extending the case of real-valued functions.
\medskip

\noindent{ \small {\normalsize \textbf{Key words:}} { Set-valued functions, functions of bounded variation, metric integral, metric approximation, integral operators, metric Fourier approximation, positive linear operators }

\noindent{ \small {\normalsize \textbf{Mathematics Subject Classification 2020:}} 26E25, 28B20, 41A35, 41A36, 42A20, 26A45


\section {Introduction} \label{Sect_Intro}

In a series of works, we developed a metric approach to approximation of set-valued functions of bounded variation. We consider set-valued functions (SVFs, multifunctions) that map a compact interval $[a,b]$ into the space of compact non-empty subsets of $\R^d$. Such functions find applications in different fields  such as control theory, optimization, dynamical systems, mathematical economy, and, more recently, geometric modeling. For general analysis of set-valued functions we refer to~\cite{AubinFrankowska:90}, and to~\cite{Chistyakov:On_BV-mappings} for the analysis of mappings of bounded variation.

Most of earlier results on approximation of set-valued functions study methods for multifunctions with convex values, see, for example,~\cite{Babenko:2016, BaierPerria:11, Campiti:2019, DynFarkhi:00, Lempio:95, Muresan:SVApprox2010, MNikolskii:Opt90, VIT:79}. 
Standard tools to work with set-valued functions are the Minkowski linear combinations, the support function and the Aumann integral.
Approximation methods based on these tools work well for set-valued functions with convex values, but fail to approximate functions with general, not necessarily convex values, due to the convexification phenomenon, observed first in~\cite{VIT:79} and extended in~\cite{DF:04}.

A breakthrough idea for approximating SVFs with general, not necessarily convex images, is due to Artstein~\cite{Artstein:MA}, who constructed piecewise linear approximants by connecting pairs of points that we term ``metric pairs''.
The three last authors of this paper in a series of works~\cite{DynFarkhi:01, DFM:Chains, DFM:07serdica, DFM:Book_SV-Approx, DynMokhov:06} develop the metric approach to approximation of set-valued functions based on so-called metric chains (extending the notion of metric pairs), metric linear combinations and metric selections, and apply this approach to adapt many classical sample-based approximation operators to SVFs. In~\cite{DFM:MetricIntegral} they introduce the notion of metric integral of bounded set-valued functions, which for SVFs of bounded variation can be represented by the collection of integrals of all the metric selections. The metric integral is extended to the weighted metric integral in~\cite{BDFM:2021}. The metric approach is applied by the authors in~\cite{BDFM:2019, BDFM:2021, BDFM:2022} to construct metric adaptations of a number of well-known approximation operators.

In~\cite{BDFM:2021} we prove pointwise convergence of metric trigonometric Fourier approximants.  In~\cite{BDFM:2022} we study a metric adaptation of general approximating integral operators, in particular the Bernstein-Durrmeyer and the Kantorovich operators. In~\cite{BDFM:2021, BDFM:2022} we show that sequences of metric integral operators converge pointwisely to the approximated multifunction $F$ at points of continuity of $F$. For $x$ a point of discontinuity, we show pointwise convergence to a set $A_F(x)$ described in terms of metric selections of $F$. The latter description seems to us to be quite unsatisfying, and one wishes to obtain a representation of $A_F(x)$ in terms of $F$. We achieve this goal in this paper, showing that under some assumptions on $F$, the set $A_F(x)$ is the metric average of the left and the right limits of $F$ at $x$, in full accordance with the case of real-valued functions.

The paper is organized as follows. In Section~\ref{Sect_Prelim} we recall background information and basic concepts, whereas Subsection~\ref{Sect_Prelim_OnSets} also contains some new results on the behavior of metric pairs. In Section~\ref{Sect_Erratum} we discuss relationships between the value of a set-valued function of bounded variation with a compact graph, and its one-sided limits. We also give a full proof of Proposition~7.2 from~\cite{BDFM:2021} whose proof in~\cite{BDFM:2021} was incomplete. Section~\ref{Sect_LimitSet} is devoted to the main result of the paper on the structure of the limit set $A_F(x)$. Proofs of technical statements are given in the Appendix.


\section {Preliminaries}\label{Sect_Prelim}

In this section we review some notation and basic notions related to sets and set-valued functions as well as notions of regularity of functions in metric spaces.

\subsection {On sets and set-valued functions}\label{Sect_Prelim_OnSets}

All sets considered from now on are sets in $\Rd$.  We denote by $\Comp$\label{CompSets} the collection of all compact non-empty subsets of~$\Rd$. 
The metric in $\Rd$ is of the form $\rho(u,v)=|u-v|$, where $|\cdot|$ is any fixed norm on $\Rd$. Recall that~$\Rd$ endowed with this metric is a complete metric space and that all norms on $\Rd$ are equivalent.

\noindent To measure the distance between two non-empty sets ${A,B \in \Comp}$, we use the Hausdorff metric  based on~$\rho$
$$
	\haus(A,B)_{\rho}= \max \left\{ \sup_{a \in A}\dist(a,B)_{\rho},\; \sup_{b \in B}\dist(b,A)_{\rho} \right\},
$$
where the distance from a point $c$ to a set $D$ is $\dist(c,D)_{\rho}=\inf_{d \in D}\rho(c,d)$.  
It is well known that $\Comp$ endowed with the Hausdorff metric is a complete metric space~\cite{RockWets, {S:93}}.
In the following, we keep the metric in $\Rd$ fixed, and omit the notation $\rho$ as a subscript.

 We denote by $|A|=\haus (A,\{0\})$ the ``norm'' of the set $A \in \Comp$.
The set of projections of $a \in \Rd$ on a set $B \inK$ is
$
\PrjXonY{a}{B}=\{b \in B \ : \ |a-b|=\dist(a,B)\},
$
and the set of metric pairs of two sets $A,B \inK$ is
$$
\Pair{A}{B} = \{(a,b) \in A \times B \ : \ a \in \PrjXonY{b}{A}\;\, \mbox{or}\;\, b\in\PrjXonY{a}{B} \}.
$$
Using metric pairs, we can rewrite 
\begin{equation} \label{haus_MetrPair}
\haus(A,B)= \max \{|a-b| \ :\ (a,b)\in \Pair{A}{B}\}.
\end{equation}

Now we introduce different notions of limits of sequences of sets. We say that a sequence of sets $\{A_n\}_{n=1}^{\infty}$ converges to a set $A$ in the Hausdorff metric if $\lim_{n \to \infty} \haus(A_n,A) = 0$.

	The upper Kuratowski limit of a sequence of sets $\{A_n\}_{n=1}^{\infty}$  is the set of all limit points of converging subsequences
	$\{a_{n_k}\}_{k=1}^{\infty}$, where ${a_{n_k} \in A_{n_k} }$, $k\in \N$, namely
	$$
	\limsup_{n \to \infty} A_n = \left\{a \ : \  \exists\, \{n_k\}_{k=1}^{\infty},\, n_{k+1}>n_k,\, k\in \N,\ \exists \, a_{n_k} \in A_{n_k} \text{ such that } \lim_{k \to \infty}a_{n_k} = a \right\}.
	$$
	
	The lower Kuratowski limit of $\{A_n\}_{n=1}^{\infty}$ is the set of all limit points of converging sequences
	$\{a_{n}\}_{n=1}^{\infty}$, where $a_{n} \in A_{n} $,  namely,
	$$
	\liminf_{n \to \infty} A_n = \left\{a \ : \  \exists \, a_{n} \in A_{n} \text{ such that } \lim_{n \to \infty}a_{n} = a \right\}.
	$$
For a set-valued function $F:[a,b]\to \Comp$  the upper Kuratowski limit at $\widetilde{x} \in [a,b]$ is 
$$
\limsup_{x \to \widetilde{x}} F(x) = \left\{y \ : \  \exists \, \{x_k\}_{k=1}^{\infty} \subset [a,b] \ \text{with} \  x_k\to\widetilde{x}\ , \ \exists \, \{y_k\}_{k=1}^{\infty} \ \text{with} \ y_k\in F(x_k),  k\in\N, \ \text{and} \ y_k \to y
\right\}.
$$
The lower Kuratowski limit of $F$ at $\widetilde{x} \in [a,b]$  is
$$
\liminf_{x \to \widetilde{x}} F(x) = \left\{y \ : \  \forall \, \{x_k\}_{k=1}^{\infty} \subset [a,b] \ \text{with} \  x_k\to\widetilde{x}\ , \ \exists \, \{y_k\}_{k=1}^{\infty} \ \text{with} \ y_k\in F(x_k),  k\in\N, \ \text{and} \ y_k \to y
\right\}.
$$
A set $A$ is a Kuratowski limit of $F(x)$ as $x \to \widetilde{x}$  if 
$$
A =  \liminf_{x \to \widetilde{x}} F(x)  = \limsup_{x \to \widetilde{x}} F(x).
$$
The same relations hold also for sequences of sets.
It is known that  convergence in the Hausdorff metric and in the sense of Kuratowski are equivalent, if the underlying metric space is compact  (see, e.g.,~\cite[Section 4.4]{AmbrosioTilli:Topics}).  In the following the notion of a limit is understood in the sense of Hausdorff/Kuratowski. 

Next, we discuss some properties of metric pairs.

\begin{lemma}\label{Lemma_Help} 
Let $A, \hat{A}, B \in \Comp$, $\hat{A} \subset A$. If $(a,b) \in \Pair{A}{B}$ and $a\in \hat{A} \subset A$, then $(a,b) \in \Pair{\hat{A}}{B}$.
	
\end{lemma}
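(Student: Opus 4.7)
The plan is to split on the two alternatives in the definition of $\Pair{A}{B}$, namely whether $a\in\PrjXonY{b}{A}$ or $b\in\PrjXonY{a}{B}$, and show that each is preserved when we restrict the first set from $A$ to $\hat{A}$, using only that $\hat{A}\subset A$ and $a\in\hat{A}$.

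First I would dispose of the easy case: suppose $b\in\PrjXonY{a}{B}$. The condition $b\in\PrjXonY{a}{B}$ means $|a-b|=\dist(a,B)$, which only involves $a$ and $B$, so it is insensitive to which set on the left we are using. Combined with the hypothesis $a\in\hat{A}$ and $b\in B$, this immediately gives $(a,b)\in\Pair{\hat{A}}{B}$.

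Next I would handle the case $a\in\PrjXonY{b}{A}$, i.e., $|a-b|=\dist(b,A)$. The key observation is monotonicity of the distance function under set inclusion: since $\hat{A}\subset A$,
$$\dist(b,\hat{A}) \;\geq\; \dist(b,A) \;=\; |a-b|.$$
On the other hand, $a\in\hat{A}$ provides an explicit upper bound $\dist(b,\hat{A})\leq |a-b|$. These two inequalities force equality, so $a\in\PrjXonY{b}{\hat{A}}$ and thus $(a,b)\in\Pair{\hat{A}}{B}$.

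There is no real obstacle here; the whole content is the monotonicity $\dist(b,\hat{A})\geq\dist(b,A)$ together with the observation that membership in $\PrjXonY{a}{B}$ makes no reference to the first set. The compactness assumption is used only implicitly, to ensure that the projection sets are non-empty so the definitions make sense.
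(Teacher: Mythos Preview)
Your argument is correct and is exactly the straightforward case split one would expect; the paper itself simply states that the proof is straightforward and omits the details, so your write-up fills this in faithfully.
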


The proof is straightforward.

\begin{lemma}\label{Lemma_Limit_of_MP_is_MP} 
	Let $\displaystyle \lim_{n \to \infty} A_n=A$, $\displaystyle\lim_{n \to \infty} B_n=B$, $A, B, A_n, B_n \in \Comp$, 
	$(a_n,b_n) \in \Pair{A_n}{B_n}$, $n \in \N$, and let $\lim_{n \to \infty} a_n=a$, \; $\lim_{n \to \infty} b_n=b$. 
	Then $(a,b) \in \Pair{A}{B}.$
	
\end{lemma}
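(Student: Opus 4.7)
The plan is to exploit the disjunctive structure of the definition of $\Pair{A_n}{B_n}$: for each $n$, either $a_n\in\PrjXonY{b_n}{A_n}$ or $b_n\in\PrjXonY{a_n}{B_n}$. By the pigeonhole principle at least one of these two alternatives holds for infinitely many $n$, so I would pass to a subsequence on which only one alternative, say $|a_n-b_n|=\dist(b_n,A_n)$ for every $n$, occurs. Since the desired conclusion $(a,b)\in\Pair{A}{B}$ concerns a single limit, verifying it along a subsequence suffices.

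Once the alternative is fixed, the proof reduces to two ingredients. First, membership: since $a_n\in A_n$ with $A_n\to A$ in the Hausdorff metric and $a_n\to a$, one has $a\in A$ (for any $\eps>0$ eventually $\dist(a_n,A)\le\haus(A_n,A)<\eps/2$ and $|a_n-a|<\eps/2$, and $A$ is closed); similarly $b\in B$. Second, continuity of the distance: I would invoke the standard estimate
\[
\bigl|\dist(b_n,A_n)-\dist(b,A)\bigr|\le|b_n-b|+\haus(A_n,A),
\]
which follows from the triangle inequality for $\dist(\cdot,\cdot)$ in each argument separately. Passing to the limit in the equality $|a_n-b_n|=\dist(b_n,A_n)$ then yields $|a-b|=\dist(b,A)$, i.e.\ $a\in\PrjXonY{b}{A}$, so $(a,b)\in\Pair{A}{B}$ as required. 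The symmetric subsequence (where $b_n\in\PrjXonY{a_n}{B_n}$) is handled identically with the roles of $A$ and $B$ swapped.

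There is no real obstacle here; the only subtlety worth noting is that one cannot keep the same side of the disjunction for every $n$ without first extracting a subsequence, but since the statement is about the limit pair and not about convergence through a specific branch, the subsequence argument is costless. All steps use only Hausdorff convergence, closedness of Hausdorff limits in $\Comp$, and 1-Lipschitz dependence of $\dist(\cdot,\cdot)$ on each argument.
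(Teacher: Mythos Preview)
Your argument is correct and follows essentially the same route as the paper: extract a subsequence on which one branch of the metric-pair disjunction holds throughout, then show the corresponding projection property passes to the limit. The only cosmetic difference is that the paper carries this out by an explicit $\varepsilon/5$ contradiction (constructing a point of $B_n$ strictly closer to $a_n$ than $b_n$), whereas you pass to the limit directly via the $1$-Lipschitz dependence of $\dist(\cdot,\cdot)$ on each argument; your version is slightly cleaner but the underlying idea is identical.
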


\begin{proof}
	Suppose, without loss of generality, that $b_n \in \PrjXonY{a_n}{B_n}$ for infinitely many $n\in \N$. We prove that $b \in \PrjXonY{a}{B}$ by contradiction. Assume that $|b-a|> \dist (a,B)$, and take $0 < \varepsilon < |b - a| - \dist(a,B)$. There exists $b^* \in B$  such that
	${|a-b^*|=\dist(a,B)<|a-b|-\varepsilon}$. Since $\lim_{n \to \infty} B_n=B$, for $n$ large enough there exists  ${b_n^* \in \PrjXonY{b^*}{B_n}}$ such that ${|b^*-b_n^*|<\varepsilon/5}$. Moreover,  for $n$ large enough we have $|a_n - a| < \varepsilon/5$,  $|b_n - b| < \varepsilon/5$. For such $n$
	we have
	\begin{align*}\
		|a_n-b_n^*|&\le |a_n-a|+|a-b^*|+|b^*-b_n^*| < \frac{\varepsilon}{5}+|a-b|-\varepsilon+\frac{\varepsilon}{5} \\&
		\le \frac{-3\varepsilon}{5}+|a-a_n|+|a_n-b_n|+|b_n-b| 
		< \frac{-3\varepsilon}{5}+ \frac{\varepsilon}{5} + |a_n-b_n| + \frac{\varepsilon}{5}
		= | a_n-b_n|-\frac{\varepsilon}{5}.
	\end{align*}
	We obtain that $b_n^*$ is closer to $a_n$ than $b_n$,  which is in contradiction to the fact that $b_n \in \PrjXonY{a_n}{B_n}$.
\end{proof}

\begin{corol}\label{Corol_Inclusion1}
Let $\displaystyle \lim_{n \to \infty} A_n=A$, $\displaystyle \lim_{n \to \infty} B_n=B$, $A, B, A_n, B_n \in \Comp$, 
$n \in \N$. Then 
$$
\limsup_{n \to \infty} \Pair{A_n}{B_n} \subseteq \Pair{A}{B}.
$$
\end{corol}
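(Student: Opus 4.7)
\medskip

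\noindent\textbf{Proof proposal.} The plan is to unpack the definition of the upper Kuratowski limit of the sequence of sets $\{\Pair{A_n}{B_n}\}_{n=1}^{\infty}$ in $\Rd \times \Rd$ and then reduce to the previously established Lemma~\ref{Lemma_Limit_of_MP_is_MP}.

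First, I would pick any $(a,b) \in \limsup_{n \to \infty} \Pair{A_n}{B_n}$. By definition, there exist a strictly increasing sequence of indices $\{n_k\}_{k=1}^{\infty}$ and pairs $(a_{n_k}, b_{n_k}) \in \Pair{A_{n_k}}{B_{n_k}}$ such that $a_{n_k} \to a$ and $b_{n_k} \to b$ as $k \to \infty$. Since convergence in the Hausdorff metric passes to subsequences, we also have $A_{n_k} \to A$ and $B_{n_k} \to B$.

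At this point, Lemma~\ref{Lemma_Limit_of_MP_is_MP} applies directly to the subsequences $\{A_{n_k}\}$, $\{B_{n_k}\}$ and the metric pairs $(a_{n_k}, b_{n_k})$, yielding $(a,b) \in \Pair{A}{B}$. In particular, the memberships $a \in A$ and $b \in B$ required for $(a,b)$ to be a metric pair follow automatically: the Hausdorff convergence $A_{n_k} \to A$ together with $a_{n_k} \in A_{n_k}$ and $a_{n_k} \to a$ forces $a \in A$ (since $A$ is closed and $\dist(a_{n_k}, A) \to 0$), and similarly for $b$.

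Since $(a,b)$ was arbitrary in the upper Kuratowski limit, the inclusion $\limsup_{n \to \infty} \Pair{A_n}{B_n} \subseteq \Pair{A}{B}$ follows. No real obstacle is expected here; the statement is essentially a repackaging of Lemma~\ref{Lemma_Limit_of_MP_is_MP} in the language of Kuratowski upper limits, with the only subtlety being the passage to a subsequence, which is handled by the stability of Hausdorff convergence under subsequences.
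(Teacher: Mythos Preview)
Your proposal is correct and follows essentially the same approach as the paper's own proof: pick an arbitrary element of the upper Kuratowski limit, extract the subsequence guaranteed by the definition, and apply Lemma~\ref{Lemma_Limit_of_MP_is_MP}. The only difference is that you spell out the (obvious) stability of Hausdorff convergence under subsequences and the membership $a\in A$, $b\in B$, which the paper leaves implicit.
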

\begin{proof}
Let $(a,b) \in \limsup\limits_{n \to \infty} \Pair{A_n}{B_n}$. By the definition of $\limsup$ there is a strictly increasing sequence $\{n_k\}_{k = 1}^\infty \subseteq \N$ such that $(a_{n_k},b_{n_k}) \in \Pair{A_{n_k}}{B_{n_k}}$ and $\lim\limits_{k \to \infty}(a_{n_k},b_{n_k})=(a,b)$.
From Lemma~\ref{Lemma_Limit_of_MP_is_MP} it follows that ${(a,b) \in \Pair{A}{B}}$.
\end{proof}


We recall the notions of  a metric chain and of a metric linear combination~\cite{DFM:MetricIntegral}.
\begin{defin}\label{Def_MetChain_MetCombination} \cite{DFM:MetricIntegral}
	Given a finite sequence of sets $A_0, \ldots, A_n \in \Comp$, $n \ge 1$,  a metric chain of $A_0, \ldots, A_n$ is an $(n+1)$-tuple $(a_0,\ldots,a_n)$ such that $(a_i,a_{i+1}) \in \Pair {A_i}{A_{i+1}}$, $i=0,1,\ldots,n-1$. The collection of all metric chains of $A_0, \ldots, A_n$ is denoted by  
	$$
	\CH(A_0,\ldots,A_n)= \left\{ (a_0,\ldots,a_n) \ : \ (a_i,a_{i+1}) \in \Pair {A_i}{A_{i+1}}, \  i=0,1,\ldots,n-1 \right\}.
	$$
	The metric linear combination of the sets $A_0, \ldots, A_n \in \Comp$, $n \ge 1$, is
	$$
	\bigoplus_{i=0}^n \lambda_i A_i =
	\left\{ \sum_{i=0}^n \lambda_i a_i \ : \ (a_0,\ldots,a_n) \in \CH(A_0,\ldots,A_n) \right\}, \quad \lambda_0,\ldots,\lambda_n \in \R.
	$$
	In case $n=1$ we write $\lambda_0 A_0 \oplus \lambda_1 A_1$. 
\end{defin}

\begin{remark}\label{Remar_ThereIsChain}
	For any $j\in \N$,  $0 \le j\le n$ and for any $a \in A_j$ there exists a metric chain $(a_0, \ldots, a_n) \in \CH(A_0,\ldots,A_n)$ such that $a_j=a$. For a possible construction see~\cite{DFM:Chains}, Figure~3.2.
\end{remark}

Note that the metric linear combination depends on the order of the sets, in contrast to the Minkowski linear combination of sets which is defined by
$$
\sum_{i=0}^n \lambda_i A_i =
\left\{ \sum_{i=0}^n \lambda_i a_i \ : \  a_i \in A_i \right\},\quad n \ge 1.
$$

\subsection {Notions of regularity of functions with values in a metric space}\label{Sect_Prelim_Regularuty}
In this section we discuss functions defined on a fixed compact interval $[a,b] \subset \R$ with values in a complete metric space $(X,\rho)$, where $X$ is either $\Rd$ or $\Comp$. 

We recall the notion of the variation of ${f:[a,b]\rightarrow X}$. Let ${\, \chi=\{x_0,\ldots, x_n\} }$, $a=x_0 < x_1 < \cdots <x_n=b$, be a partition of the interval $[a,b]$ with the norm 
$$
{\displaystyle |\chi|=\max_{0\le i\le n-1} (x_{i+1}-x_i)}.
$$ 
The variation of $f$ on the partition $\chi$ is defined as 
$
V(f,\chi) = \sum_{i=1}^{n} \rho(f(x_i),f(x_{i-1}))\, .
$
The total variation of $f$ on $[a,b]$ is 
$$
V_{a}^{b}(f) = \sup_{\chi} V(f,\chi),
$$
where the supremum is taken over all partitions of $[a,b]$.

A function $f$ is said to be of bounded variation on $[a,b]$ if ${ V_{a}^{b}(f) < \infty}$. We call functions of bounded variation BV functions and write $f \in \BV$.  

For $f \in \BV$ the  function $v_f:[a,b]\rightarrow \R$,\, $v_f(x)=V_{a}^{x}(f)$ is called the variation function of $f$. Note that 
$$
V_{z}^{x}(f)=v_f(x)-v_f(z) \quad  \mbox{for} \quad a\le z<x \le b,
$$
and that $v_f$ is monotone non-decreasing.

We recall the notion of the local modulus of continuity~\cite{SendovPopov}, which is central to the approximation of functions at continuity points.
For $f : [a,b] \to X$  the local modulus of continuity at $x^* \in [a,b]$ is
$$
	\LocalModulCont{f}{x^*}{\delta} = \sup \left\{\, \rho(f(x_1),f(x_2)): \; x_1,x_2 \in \left[x^*-\delta/2,x^*+\delta/2 \right]\cap[a,b] \,\right\},\quad \delta >0.
$$
The left and the right local moduli of continuity of $f$ at $x^*\in[a,b]$  are defined respectively  by
$$
\LeftLocalModul{f}{x^*}{\delta}=\sup \left\{ \rho(f(x),f(x^*)) \ :\ x\in [x^*-\delta, x^*] \cap [a,b] \right \},\quad \delta>0,
$$
and
$$
\omega^{+}(f,x^*,\delta)=\sup \left \{\rho(f(x),f(x^*)) \ : \ x\in [x^*,x^*+\delta] \cap [a,b] \right \},\quad \delta>0.
$$

It follows from the definition of the variation that~

\begin{result}\label{Result_f_OneSidedCont->v_f_OneSidedCont}
		A function $f:[a,b]\to X$, $f \in \BV$ is left continuous at $x^*\in (a,b]$ if and only if $v_f$ is left continuous at~$x^*$.
		The function $f$ is right continuous at $x^* \in [a,b)$ if and only if $v_f$ is right continuous at~$x^*$.
\end{result}

A function $f : [a,b] \to X$ of bounded variation with values in a complete metric space $(X,\rho)$ is not necessarily continuous, but has right and left limits at any point $x$~\cite{Chistyakov:On_BV-mappings}. 
We denote the one-sided limits by
$$ 
f(x+) = \lim_{ t \to x \, , t>x } f(t), \quad f(x-) = \lim_{ t \to x\, , t<x } f(t) .
$$  
From now on we write $\displaystyle \lim_{ t \to x+ },\; \lim_{ t \to x- }$ 
instead of $\displaystyle \lim_{ t \to x \, , t>x },\, \lim_{ t \to x \, , t<x }$ respectively.
\smallskip

In~\cite{BDFM:2021}, we introduced the notion of the left and right local quasi-moduli.
For a function $f : [a,b] \to X$ of bounded variation, the left local quasi-modulus at point $x^*$ is
$$
	\NewLeftLocalModul{f}{x^*}{\delta} = \sup{ \big \{ \rho(f(x^*-),f(x)) \ : \ x \in [x^*-\delta,x^*) \cap [a,b] \big\} }, \quad \delta >0\ , \; x^* \in (a,b].
$$
Similarly, the right local quasi-modulus is
$$
	\NewRightLocalModul{f}{x^*}{\delta} = \sup{ \{ \rho(f(x^*+),f(x)) \ : \ x \in (x^*,x^* + \delta] \cap [a,b] \} }, \quad \delta >0\ , \; x^* \in [a,b).
$$
Clearly, for $f \in \BV$ the local quasi-moduli satisfy
$$
	\lim_{ \delta \to 0^+ } \NewLeftLocalModul{f}{x^*}{\delta}=0, \quad x^* \in (a,b],
	\quad \text{and} \quad
	\lim_{ \delta \to 0^+ } \NewRightLocalModul{f}{x^*}{\delta} =0 , \quad x^* \in [a,b) .
$$

\begin{result}\label{Lemma_QuasiMod_f<=QuasiMod_v_f}~\cite[Lemma~2.5]{BDFM:2022}
	Let $f : [a,b] \to X$, $f \in \BV$, then for any $x^*\in (a,b]$ or $[a,b)$, respectively,  and $ \delta>0$ we have
	$$
	\NewLeftLocalModul{f}{x^*}{\delta} \le \NewLeftLocalModul{v_f}{x^*}{\delta}, \quad  \NewRightLocalModul{f}{x^*}{\delta} \le \NewRightLocalModul{v_f}{x^*}{\delta}.
	$$
\end{result}

\subsection {Metric selections of set-valued functions }\label{Sect_MS_MetIntegral}

We consider set-valued functions (SVFs, multifunctions) mapping a compact interval $[a,b] \subset \R$ to $\Comp$. 
The graph of a multifunction $F$ is the set of points in $\R^{d+1}$ 
$$
\Graph(F)= \left \{(x,y) \ : \ y\in F(x),\; x \in [a,b] \right \}.
$$
It is easy to see that if $F \in  \BV$ then $\Graph(F)$ is a bounded set and 
$$
\|F\|_\infty =  \sup_{x\in[a,b]} | F(x) | <  \infty . 
$$
We denote the class of SVFs of bounded variation with compact graphs by~$\calF$.

For a set-valued function $F : [a,b] \to \Comp$, a single-valued function ${s:[a,b] \to \Rd}$ such that $s(x) \in F(x)$ for all $x \in [a,b]$ is called a selection of~$F$.

The notions of chain functions and metric selections are among central notions in our work.
Given a multifunction $F: [a,b] \to \Comp$, a partition $\chi=\{x_0,\ldots,x_n\} \subset [a,b]$, $a=x_0 < \cdots < x_n=b$, and a corresponding metric chain $\phi=(y_0,\ldots,y_n) \in \CH \left ( F(x_0),\ldots,F(x_n) \right )$ (see Definition~\ref{Def_MetChain_MetCombination}), the \textbf{chain function} based on $\chi$ and $\phi$ is
\begin{equation}\label{def_ChainFunc}
	c_{\chi, \phi}(x)= \left \{ \begin{array}{ll}
		y_i, & x \in [x_i,x_{i+1}), \quad i=0,\ldots,n-1,\, \\
		y_n, & x=x_n.
	\end{array}
	\right.
\end{equation}

A selection $s$ of $F$ is called a \textbf{metric selection}, if there is a sequence of chain functions $\{ c_{\chi_k, \phi_k} \}_{k \in \N}$ of~$F$ with ${\lim_{k \to \infty} |\chi_k| =0}$ such that
$$
s(x)=\lim_{k\to \infty} c_{\chi_k, \phi_k}(x) \quad \mbox{pointwisely on} \ [a,b].
$$
We denote the set of all metric selections of $F$ by $\setMS$.

Note that the definitions of chain functions and metric selections imply that a metric selection $s$ of a multifunction $F$ is constant in any open interval where the graph of $s$ stays in the interior of  $\Graph(F)$.

Below we quote several relevant results from~\cite{DFM:MetricIntegral} and~\cite{BDFM:2021}.

\begin{result}\label{Result_MetrSel_InheritVariation}\cite[Theorem~3.6]{DFM:MetricIntegral}
	
	Let $s$ be a metric selection of $F \in \calF$. Then $V_a^b(s) \le V_a^b(F)$ and $\|s\|_\infty\le \|F\|_\infty$.
\end{result}
\begin{result}\label{Result_MetSel_ThroughAnyPoint_Repres}\cite[Corollary~3.7]{DFM:MetricIntegral}
	Let $F \in \calF$. Through any point $\alpha \in \Graph(F)$ there exists a metric selection which we denote by~${\,s_\alpha}$.
	Moreover,  $F$ has a representation by metric selections, namely
	$$
	F(x) = \{ s_\alpha(x) \ :\ \alpha \in \Graph(F)\}, \quad x\in [a,b].
	$$
\end{result}

The last result implies
\begin{corol}\label{Corol_Repres_F}
For $F \in \calF$,
$$
F(x) = \{ s(x) \ :\ s \in  \setMS \}, \quad x\in [a,b].
$$
\end{corol}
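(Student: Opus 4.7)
The plan is to obtain the claim as an essentially immediate consequence of Result~\ref{Result_MetSel_ThroughAnyPoint_Repres}, by showing two inclusions.

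First I would establish $\{s(x):s\in\setMS\}\subseteq F(x)$. This direction is trivial: by the very definition of a selection (recalled just above Definition~\ref{Def_MetChain_MetCombination} in the subsection on metric selections), any $s\in\setMS$ is in particular a selection of $F$, so $s(x)\in F(x)$ for every $x\in[a,b]$.

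Next I would establish the reverse inclusion $F(x)\subseteq\{s(x):s\in\setMS\}$. Fix an arbitrary $x\in[a,b]$ and an arbitrary $y\in F(x)$. Then the point $\alpha=(x,y)$ belongs to $\Graph(F)$, so Result~\ref{Result_MetSel_ThroughAnyPoint_Repres} applied at this point yields a metric selection $s_\alpha\in\setMS$ that passes through $\alpha$, i.e.\ with $s_\alpha(x)=y$. Hence $y\in\{s(x):s\in\setMS\}$, and the inclusion follows.

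There is no real obstacle: all of the structural work (the existence of a metric selection through any graph point, and the representation formula in terms of the indexed family $\{s_\alpha\}_{\alpha\in\Graph(F)}$) is packaged into Result~\ref{Result_MetSel_ThroughAnyPoint_Repres}, and the corollary amounts to reindexing that representation by metric selections themselves rather than by graph points. The only thing to notice is that passing from the indexed family $\{s_\alpha:\alpha\in\Graph(F)\}$ to the set $\setMS$ can only enlarge the collection of selections used, but by the first inclusion this enlargement does not add values outside $F(x)$, so the two representations coincide.
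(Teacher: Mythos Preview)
Your proposal is correct and matches the paper's approach: the paper simply introduces the corollary with the phrase ``The last result implies'' immediately after Result~\ref{Result_MetSel_ThroughAnyPoint_Repres} and gives no further proof. Your two-inclusion argument is exactly the intended unpacking of that implication.
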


\begin{result}\label{Result_LocModuli_s<=LocModuli_v_F}\cite[Theorem~4.9]{BDFM:2021}
	
	Let $F \in \calF$, $s$ be a metric selection of $F$  and $x^*\in[a,b]$. Then
	$$
	\LocalModulCont{s}{x^*}{\delta} \le \LocalModulCont{v_F}{x^*}{ 2\delta}, \quad \delta >0.
	$$
	In particular, if $F$ is continuous at $x^*$, then $s$ is continuous at $x^*$.
\end{result}


\section{The one-sided limits of SVFs and their  metric selections}\label{Sect_Erratum}

In this section we study relationships between the value $F(x)$ of $F \in \calF$ and its one-sided limits at $x \in (a,b)$. We recall and refine some relevant results from Section~7 of our recent paper \cite{BDFM:2021}.

\begin{result}\label{Result_Right-Left_Limits_in_F} \cite[Proposition~7.1]{BDFM:2021}\\
		Let $F\in \calF$ and $x \in (a,b)$, then 
		$$F(x-) \cup F(x+) \subseteq F(x).$$
\end{result}
	
The next result is stated in~\cite{BDFM:2021}, however, the proof in~\cite{BDFM:2021} is incomplete. Here we provide a constructive method of proof that is applicable, with obvious minor changes, to each of the two claims.
	
	\begin{propos}\label{Propos_RightContSelections} \cite[Proposition~7.2]{BDFM:2021}\; \\
		For $F \in \mathcal{F}[a,b]$ 
		\begin{enumerate}\label{Claim_1-}
			\item[(i)]\quad  $F(x-) = \{ s(x-) \ : \ s \in \mathcal{S}(F) \}, \quad x \in (a,b] $,  
			\item[(ii)]\quad  $F(x+) = \{ s(x+) \ : \ s \in \mathcal{S}(F) \}, \quad x \in [a,b)$.
		\end{enumerate}	
	\end{propos}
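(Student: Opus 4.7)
My plan is to establish (i); claim (ii) follows by a symmetric argument. The easy inclusion $\{s(x-) \,:\, s \in \setMS\} \subseteq F(x-)$ follows in one sentence: every metric selection is of bounded variation by Result~\ref{Result_MetrSel_InheritVariation}, so $s(x-)$ exists, and for any sequence $t_n \to x^-$ we have $s(t_n) \in F(t_n)$ with $F(t_n) \to F(x-)$ in the Hausdorff (equivalently Kuratowski) metric, giving $s(x-) = \lim_n s(t_n) \in F(x-)$.

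For the reverse inclusion, given $y \in F(x-)$ I would construct a metric selection $s$ with $s(x-) = y$ by producing an explicit sequence of chain functions. First pick an anchor $y^\ast \in F(x)$ forming a metric pair with $y$: by Result~\ref{Result_Right-Left_Limits_in_F} one can take $y^\ast = y$ when $x \in (a,b)$; for $x = b$ I would take $y^\ast$ as an accumulation point of the projections constructed next. For each $k \in \N$ I would choose a partition $\chi_k$ of $[a,b]$ with $|\chi_k| \le 1/k$ containing $x$ as a node, say $x = x_{m_k}^{(k)}$, and set $t_k := x_{m_k - 1}^{(k)}$, so $t_k \to x^-$. I would then select $a_k \in \PrjXonY{y}{F(t_k)}$ and $b_k \in \PrjXonY{a_k}{F(x)}$; by construction $(a_k, b_k) \in \Pair{F(t_k)}{F(x)}$, and the Hausdorff convergence $F(t_k) \to F(x-)$ gives $a_k \to y$ and $b_k \to y^\ast$ (along a subsequence if necessary). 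Finally I would extend $(a_k, b_k)$ outward to a full chain $\phi_k$ on $\chi_k$ by iterated metric projection, let $c_k$ be the associated chain function from~(\ref{def_ChainFunc}), and apply Helly's selection theorem (using $V_a^b(c_k) \le V_a^b(F)$) to extract a pointwise convergent subsequence $c_{k_j} \to s$, which is a metric selection by definition.

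The crux is to verify $s(x-) = y$. Telescoping along the backward part of the chain from $a_{k_j}$ to $c_{k_j}(t)$ and using that consecutive chain values form metric pairs yields $|c_{k_j}(t) - a_{k_j}| \le V_{x_{i(k_j,t)}^{(k_j)}}^{\,t_{k_j}}(F)$, where $i(k_j,t)$ is the largest index with $x_{i(k_j,t)}^{(k_j)} \le t$. Passing to the limit (monotonicity of $v_F$ gives $v_F(x_{i(k_j,t)}^{(k_j)}) \to v_F(t-)$ and $v_F(t_{k_j}) \to v_F(x-)$) produces the key estimate $|s(t) - y| \le v_F(x-) - v_F(t-)$, and letting $t \to x^-$ this bound vanishes because $v_F$ has a left limit at $x$ equal to $v_F(x-)$.

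The main obstacle, and what I believe is the gap in the argument of~\cite{BDFM:2021}, is the choice of where to seed the chain. The natural attempt of seeding at $(x,y)$ and projecting backward produces only the looser bound $V_t^x(F) = v_F(x) - v_F(t)$, whose limit as $t \to x^-$ is the left-jump $v_F(x) - v_F(x-)$, generally nonzero when $F$ is not left continuous at $x$. Shifting the seed to $(t_k, a_k)$ one partition point to the left of $x$ replaces $v_F(x)$ by $v_F(x-)$ in the estimate, while the companion seed $b_k \in \PrjXonY{a_k}{F(x)}$ ensures $(a_k, b_k)$ is a genuine metric pair, so that the whole construction is a valid chain function of $F$.
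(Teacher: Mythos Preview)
Your argument is correct, and you have correctly diagnosed why a naive ``seed at $(x,y)$ and bound by $V_t^x(F)$'' fails. Your fix, however, differs from the one the paper uses.

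The paper seeds the chain at $x^*$ with $c_n(x^*)=y$ (this is legitimate since $y\in F(x^*)$; note that compactness of $\Graph(F)$ gives $F(b-)\subseteq F(b)$ as well, so your separate treatment of $x=b$ is unnecessary). To avoid the jump $v_F(x^*)-v_F(x^*-)$ in the estimate, the paper introduces the auxiliary multifunction $\widetilde F^-$ obtained from $F$ by replacing $F(x^*)$ with $F(x^*-)$. Since $y\in F(x^*-)$, the very same chain functions are chain functions of $\widetilde F^-$, hence $s\in\mathcal S(\widetilde F^-)$, and Result~\ref{Result_MetrSel_InheritVariation} applied to $\widetilde F^-$ gives
\[
|s(x^*-\delta)-y|=|s(x^*-\delta)-s(x^*)|\le V_{x^*-\delta}^{x^*}(\widetilde F^-)\le \LeftLocalModul{v_{\widetilde F^-}}{x^*}{\delta}\to 0,
\]
because $\widetilde F^-$ (and hence $v_{\widetilde F^-}$) is left continuous at $x^*$.

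Your route instead stays with $F$ and works at the level of chain functions: telescoping only between the partition node $x_{i(k_j,t)}\le t$ and the node $t_{k_j}<x$ yields $|c_{k_j}(t)-a_{k_j}|\le v_F(t_{k_j})-v_F(x_{i(k_j,t)})$, and passing to the limit gives $|s(t)-y|\le v_F(x-)-v_F(t-)$. (One small imprecision: $v_F(x_{i(k_j,t)})$ need not converge to $v_F(t-)$ if $t$ happens to lie in infinitely many partitions, but monotonicity still gives $\liminf_j v_F(x_{i(k_j,t)})\ge v_F(t-)$, which is all you need.) This is a more hands-on, elementary argument that avoids the auxiliary $\widetilde F^-$ and the appeal to Result~\ref{Result_MetrSel_InheritVariation}; the paper's version is shorter and more conceptual, reducing the problem to a known inheritance-of-variation result for a suitably modified SVF. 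Both approaches cure the same defect in the original proof.
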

	\begin{proof}
		(i) Fix $x^*\in(a,b]$. The inclusion ${\{ s(x^*-): \ s\in \mathcal{S}(F)\} \subseteq F(x^*-)}$ follows from the fact that $F(x^*-)$ is the Kuratowski upper limit 
		$\displaystyle \limsup_{x \to x^*-}{F(x)} =\limsup_{x \to x^*-}{\{ s(x): \ s\in \mathcal{S}(F)\}}$. 
		
		Now we prove the reverse inclusion, i.e. that  $F(x^*-) \subseteq \{s(x^*-) \ :\ s\in \mathcal{S}(F)\} $.
		Let $y^*_{-} \in F(x^*-)$, in view of Result~\ref{Result_Right-Left_Limits_in_F}\ $y^*_{-} \in F(x^*)$.  We have to show that there exists $ s \in \setMS$ such that $y^*_{-}=s(x^*-)$. 
		We construct a sequence of chain functions converging to the required $s$ pointwisely.  Let $\{\chi_n\}_{n\in \N}$ be a sequence of partitions of $[a,b]$ such that
		$$ 
		x^* \in \chi_n,\, \forall n \in\N \quad \text{and} \quad  \lim_{n \to \infty} |\chi_{n}| =0,
		$$ 
		and let $\xi_n^{-}$, $\xi_n^{+}$ be the closest points to $x^*$ in the partition $\chi_n$ from the left and from the right respectively.		
		Let $\{c_n\}_{n \in \N}$ be a sequence of chain functions of~$F$ based on $\{\chi_n\}_{n\in \N}$, 
		satisfying for all $n \in \N$: 
		$$
		c_n(x^*)=y^*_{-} ,  \quad c_n(\xi_n^{-}) \in  \PrjXonY{y^*_{-}}{F(\xi_n^{-})} ,  
		\quad c_n(\xi_n^{+}) \in  \PrjXonY{y^*_{-}}{F(\xi_n^{+})}.
		$$
		For a construction of chain functions with these properties see Definition~\ref{Def_MetChain_MetCombination} and~\eqref{def_ChainFunc}.
		By Helly's Selection Principle there exists a subsequence of $\{c_n\}_{n \in \N}$ converging to a metric selection $s$ pointwisely for each $x\in[a, b]$. 
		By construction we have ${\displaystyle \lim_{n \to \infty} c_n(x^*) = y^*_{-} = s(x^*) }$.
		
		Define a multifunction ${\widetilde F^{-} \in \calF}$~by 
		$$
		\widetilde F^{-} (t) =
		\left \{ \begin{array}{ll}
			F(t), & t \neq x^*, \\
			F(x^*-), & t=x^*.
		\end{array}
		\right.
		$$
		Clearly,  $\widetilde F^{-}$ is left continuous at~$x^*$ and therefore $v_{\widetilde F^{-}}$\  is left continuous at $x^*$ as well (see Result~\ref{Result_f_OneSidedCont->v_f_OneSidedCont}). By construction $s \in  \setMSof{F}$ and also $s \in  \setMSof{\widetilde F^{-}}$, since $y^*_{-} \in F(x^*)$ and $y^*_{-} \in 	\widetilde F^{-}(x^*)$, and the above chain functions of $F$ are also chain functions of $\widetilde F^{-}$.
			
		To prove that $y^*_{-}=s(x^*-)$, we estimate $|s(x^*-\delta)-y^*_{-}|$, for $0< \delta < x^*-a$,
		\begin{align*}
			|s(x^*-\delta)-y^*_{-}| &= |s(x^*-\delta)-s(x^*)| \le \Var{x^*-\delta}{x^*}{s} \le \Var{x^*-\delta}{x^*}{\widetilde F^{-}} \le \LeftLocalModul{v_{\widetilde F^{-}}}{x^*}{\delta},
		\end{align*}
		where the second inequality follows from Result~\ref{Result_MetrSel_InheritVariation}. 
		Since
		$\LeftLocalModul{v_{\widetilde F^{-}}}{x^*}{\delta}$ tends to zero as $\delta \to 0^+$,
		we get 
		$$
		s(x^*-) = \lim_{\delta \to 0^+ } s(x^*-\delta) = y^*_{-}=s(x^*) \in F(x^*-).
		$$
		In particular $s$ is left continuous at $x^*$.
		
		The proof of~{(ii)} is similar with obvious minor changes such as replacing $y^*_{-} \in F(x^*-)$  by ${y^*_{+} \in F(x^*+)}$ and $\widetilde F^{-}$ by $\widetilde F^{+}$. In this case the constructed metric selection is right continuous at $x^*$.
	\end{proof}
	
A direct conclusion from Proposition~\ref{Propos_RightContSelections} is 
\begin{corol}\label{corol_contin}
	In the notation and assumptions of Proposition~\ref{Propos_RightContSelections} 
\begin{enumerate}
	\item[(i)]\ If $y^* \in F(x^*-)\cap F(x^*+)$, then there exists a metric selection $s$ satisfying $s(x^*)=y^*$ which is continuous at $x^*$.
	\item[(ii)]\ If $y^* \in F(x^*-)\setminus F(x^*+)$, then there exists a metric selection $s$ satisfying $s(x^*)=y^*$ which is left continuous at $x^*$.
	\item[(iii)]\ If $y^* \in F(x^*+)\setminus F(x^*-)$, then there exists a metric selection $s$ satisfying $s(x^*)=y^*$ which is right continuous at $x^*$.		
\end{enumerate}
\end{corol}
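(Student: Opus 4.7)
The plan is to deduce all three parts directly from the construction carried out in the proof of Proposition~\ref{Propos_RightContSelections}, which already contains the essential ingredients.

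For parts (ii) and (iii), the argument is essentially immediate. In case (ii), $y^*\in F(x^*-)$, so Proposition~\ref{Propos_RightContSelections}(i) produces a metric selection $s$ of $F$ with $s(x^*-)=y^*$; inspection of the construction shows additionally $s(x^*)=y^*$, which is precisely left continuity at $x^*$. Case (iii) is symmetric via Proposition~\ref{Propos_RightContSelections}(ii).

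For part (i), the task is to produce a \emph{single} metric selection that is both left and right continuous at $x^*$. The key observation is that the chain functions $c_n$ built in the proof of Proposition~\ref{Propos_RightContSelections}(i) are already symmetric with respect to $x^*$: by construction $c_n(x^*)=y^*$ together with $c_n(\xi_n^-)\in\Pi(y^*,F(\xi_n^-))$ \emph{and} $c_n(\xi_n^+)\in\Pi(y^*,F(\xi_n^+))$. Applying Helly's Selection Principle to $\{c_n\}$ (as in the cited proof) yields a single metric selection $s\in\setMS$ with $s(x^*)=y^*$, and this $s$ inherits information on both sides of $x^*$.

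To establish left continuity I would invoke the left half of the proof verbatim: define $\widetilde{F}^{-}$ as in Proposition~\ref{Propos_RightContSelections}, and use Lemma~\ref{Lemma_Help} (with $y^*\in F(x^*-)\subseteq F(x^*)$) to conclude that the metric pair condition at the link involving $x^*$ is preserved when $F(x^*)$ is replaced by $F(x^*-)$. Hence $s\in\setMSof{\widetilde{F}^{-}}$, and the variation estimate from Result~\ref{Result_MetrSel_InheritVariation}, together with Result~\ref{Result_f_OneSidedCont->v_f_OneSidedCont} applied to the (left continuous) multifunction $\widetilde{F}^{-}$, gives $s(x^*-)=y^*$. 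Symmetrically, defining $\widetilde{F}^{+}(t)=F(t)$ for $t\ne x^*$ and $\widetilde{F}^{+}(x^*)=F(x^*+)$, the hypothesis $y^*\in F(x^*+)$ lets the same sequence $c_n$ — now using the right projection condition — play the analogous role, yielding $s(x^*+)=y^*$. Combined with $s(x^*)=y^*$, this is the desired continuity at $x^*$.

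The one nontrivial point, and the only obstacle worth naming, is exactly this: one must check that \emph{one and the same} metric selection serves both one-sided continuity claims in (i), rather than separately invoking (i) and (ii) of Proposition~\ref{Propos_RightContSelections} and obtaining two unrelated selections. This is handled by the built-in symmetry of the construction in the proof of Proposition~\ref{Propos_RightContSelections}, where both projection conditions at $\xi_n^-$ and $\xi_n^+$ are imposed from the outset.
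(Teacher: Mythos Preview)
Your proposal is correct and essentially matches the paper's approach: the paper states the corollary as ``a direct conclusion from Proposition~\ref{Propos_RightContSelections}'' without further proof, and your write-up is precisely the unpacking of that phrase. In particular, your observation for part~(i) --- that the construction in the proof of Proposition~\ref{Propos_RightContSelections} already imposes the projection conditions on \emph{both} neighbors $\xi_n^-$ and $\xi_n^+$, so that a single Helly limit $s$ can be tested against both $\widetilde F^{-}$ and $\widetilde F^{+}$ --- is exactly the intended point.
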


\begin{remark}\label{Remark_cont_1}
	While for $F$ which is left continuous at $x^*$ {\bf all} its metric selections are left continuous at $x^*$ as well (see Theorem~4.7 in~\cite{BDFM:2021}), in the proof of~(ii) in Proposition~\ref{Propos_RightContSelections} we obtain that for $F$ which is right continuous at $x^*$ {\bf there exists} a metric selection which is right continuous at~$x^*$.
\end{remark}

\begin{lemma}\label{lemma_contin}
 Let $F \in \mathcal{F}[a,b]$. If $(x^*, y^*)$ is an interior point of $\Graph(F)$, then {\bf any} metric selection satisfying  $s(x^*)=y^*$ is continuous at $x^*$ and is constant in a small neighborhood of $x^*$.
\end{lemma}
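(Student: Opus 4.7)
The plan is to use the interior point hypothesis to force each approximating chain function of $s$ to be constant in a small neighborhood of $x^*$, and then to pass to the pointwise limit. Since $(x^*,y^*)\in\intr(\Graph(F))$, there exists $r>0$ with $[x^*-r,x^*+r]\subset(a,b)$ and $\{x\}\times B(y^*,r)\subset\Graph(F)$ for every $x\in(x^*-r,x^*+r)$; equivalently, $B(y^*,r)\subset F(x)$ for all such $x$, where $B(y^*,r)$ denotes the open ball in $\Rd$.

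By the definition of a metric selection, there exist chain functions $c_k=c_{\chi_k,\phi_k}$ of $F$ with $|\chi_k|\to 0$ and $c_k(x)\to s(x)$ pointwise on $[a,b]$. Fix $k$ so large that $|\chi_k|<r/4$ and $|c_k(x^*)-y^*|<r/4$, let $[x_i^k,x_{i+1}^k)$ be the interval of $\chi_k$ containing $x^*$, and write $\phi_k=(y_0^k,\ldots,y_{n_k}^k)$, so that $c_k(x^*)=y_i^k$ and $|y_i^k-y^*|<r/4$.

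The heart of the argument is an induction showing $y_j^k=y_i^k$ for every index $j$ with $x_j^k\in(x^*-r,x^*+r)$. Assume $y_j^k=y_i^k$ and $x_{j+1}^k\in(x^*-r,x^*+r)$. Then $B(y_j^k,3r/4)\subset B(y^*,r)\subset F(x_{j+1}^k)$, so $y_j^k\in\intr(F(x_{j+1}^k))$. The metric pair condition $(y_j^k,y_{j+1}^k)\in\Pair{F(x_j^k)}{F(x_{j+1}^k)}$ splits into two cases. If $y_{j+1}^k\in\PrjXonY{y_j^k}{F(x_{j+1}^k)}$, then since $y_j^k\in F(x_{j+1}^k)$ gives $\dist(y_j^k,F(x_{j+1}^k))=0$, we must have $y_{j+1}^k=y_j^k$. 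If instead $y_j^k\in\PrjXonY{y_{j+1}^k}{F(x_j^k)}$, invoke the topological fact that a projection of a point onto a compact set can lie in the interior of that set only when the point itself belongs to the set: hence $y_{j+1}^k\in F(x_j^k)$, and then $|y_j^k-y_{j+1}^k|=\dist(y_{j+1}^k,F(x_j^k))=0$. The step from $j$ to $j-1$ is identical.

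Since $|\chi_k|<r/4$, every partition interval meeting $(x^*-r/2,x^*+r/2)$ has both endpoints in $(x^*-r,x^*+r)$, so the induction gives $c_k\equiv y_i^k$ on $(x^*-r/2,x^*+r/2)$. Passing to the limit $k\to\infty$ and using $y_i^k=c_k(x^*)\to y^*$ together with $c_k(x)\to s(x)$, we obtain $s(x)=y^*$ on $(x^*-r/2,x^*+r/2)$; hence $s$ is constant in a neighborhood of $x^*$ and, in particular, continuous there. I expect the main obstacle to be the small topological lemma that a projection onto a compact set cannot lie in its interior unless the projected point is in the set; once that is in hand, the interior hypothesis at $(x^*,y^*)$ propagates cleanly from one chain node to the next.
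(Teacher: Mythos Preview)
Your proof is correct and follows essentially the same approach as the paper's: both argue that the approximating chain functions must be constant near $x^*$ because the interior hypothesis forces consecutive chain nodes (which form metric pairs) to coincide, and then pass to the pointwise limit. Your write-up is more detailed than the paper's terse argument---you carry out the induction along the chain explicitly, separate the two cases of the metric-pair condition, and isolate the small topological lemma that a nearest point in a compact set cannot lie in its interior unless the projected point already belongs to the set; the paper compresses all of this into the single line that $(y_1,y_2)\in\Pi\big(F(x_1),F(x_2)\big)$ with $y_1,y_2\in I_{y^*}$ forces $y_1=y_2$.
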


\begin{proof}
Since  $(x^*, y^*)$ is an interior point of $\Graph(F)$ there exists a small open neighborhood of this point, ${I_{x^*}\times I_{y^*}}$, in the interior  of $\Graph(F)$. Thus $y^* \in I_{y^*} \subset F(x)$ for all $x \in I_{x^*}$ and
therefore for any ${ y_1, y_2 \in I_{y^*} }$, $(y_1, y_2) \in \Pair{F(x_1)}{F(x_2)}$ for some $x_1, x_2 \in I_{x^*}$ if and only if $y_1=y_2$. Hence any chain function, with graph passing through $I_{x^*}\times I_{y^*}$, is constant there. In particular, any metric selection $s$ satisfying $s(x^*)=y^*$, being the pointwise limit of chain functions, is also constant in $I_{x^*}$.
\end{proof}

\begin{remark}\label{Remark_Cont_2}
If $y^* \in F(x^*-)\cap F(x^*+)$, but $(x^*, y^*)$ is not an interior point of $\Graph(F)$, then a metric selection~$s$ satisfying $s(x^*)=y^*$ is not necessarily continuous at $x^*$.
\end{remark}


\section{The limit set $A_{F}(x)$}\label{Sect_LimitSet}

In our recent research, the following set surfaced, for a function $F \in \calF$,
$$
A_{F}(x) =\left \{ \frac{1}{2} \left( s(x-) + s(x+) \right) \ : \ s \in \setMS \right \} , \; x \in (a,b).
$$
This set appeared as a limit set of sequences of metric Fourier approximations~\cite{BDFM:2021}, or of metric intergal approximation operators~\cite{BDFM:2022}. 

The set $A_{F}(x)$ extends the well known limit, $\frac{1}{2} \left( f(x-) + f(x+) \right)$, of many integral approximation operators and of Fourier approximations from real-valued functions to set-valued ones.

In~\cite{BDFM:2021} we conjectured that under certain assumptions on $F$ one has
\begin{equation} \label{main_equality}
A_F(x) = \frac{1}{2} F(x-)\oplus \frac{1}{2} F(x+)
\end{equation}
with $\oplus$ as in Definition~\ref{Def_MetChain_MetCombination}.
Notice that~\eqref{main_equality} does not hold for all $F \in \calF$. See the examples below, taken from~\cite[Section~7]{BDFM:2021}.

	\begin{remark}\label{Remark_LimitSet=F}
		For $x\in(a,b)$ a point of continuity of $F$, we have
		$$ 
		A_F(x) = F(x)= \frac{1}{2} F(x-)\oplus \frac{1}{2} F(x+).
		$$
		Indeed, by Result~\ref{Result_LocModuli_s<=LocModuli_v_F} all metric selections of $F$ are continuous at $x$, 
		therefore 
		$$
		A_{F}(x) = \left \{ s(x) : \ s \in \setMS \right \} , \; x \in (a,b),
		$$
		and then by Corollary~\ref{Corol_Repres_F} $A_{F}(x) = F(x)$. 
	
		Moreover, since  $F(x-)=F(x+)=F(x)$ we get  $\frac{1}{2} F(x-)\oplus \frac{1}{2} F(x+)=F(x)$.
	\end{remark}

Now we formulate two properties of $F\in \calF$ that are sufficient to guarantee~\eqref{main_equality} also at discontinuity points.

\noindent {\bf Property 1 }(Minimality of $F$)\label{prop1}

We say that $F\in \calF$ has Property 1 at $\xi \in (a,b)$ if 
$$
	F(\xi) = F(\xi-)\cup F(\xi+).
$$

\begin{remark}\label{Remark_Prop1} ${}$\\
(i) The name "Minimality of $F$" reflects Result~\ref{Result_Right-Left_Limits_in_F}.\\
(ii)  Property~1 holds at continuity points of $F\in \calF$.
\end{remark}

\noindent {\bf Property 2 }

We say that $F \in \calF$ satisfies Property~2 at $\xi \in (a,b)$, if for each pair $(y^-, y^+) \in \Pair{F(\xi-)}{F(\xi+)}$ there exist four sequences 
$\{\xi_n^-\}_{n \in \N}$, $\{\xi_n^+\}_{n \in \N}$, $\{y_n^-\}_{n \in \N} $, $\{y_n^+\}_{n \in \N}$, such that 
$$
(y_n^-, y_n^+) \in \Pair{F(\xi_n^-)}{F(\xi_n^+)} , \; n \in \N,
$$
where $\xi_n^- < \xi < \xi_n^+$,
$\lim_{n \to \infty} \xi_n^- = \xi = \lim_{n \to \infty} \xi_n^+$,
$\lim_{n \to \infty} y_n^- = y^-$,  $\lim_{n \to \infty} y_n^+ = y^+$.

\begin{remark}\label{Remark_Prop2} Property 2 can be written equivalently as
$$
\Pair{F(\xi-)}{F(\xi+)} \subseteq \limsup_{(x,z) \to (\xi-, \xi+)} \Pair{F(x)}{F(z)} .
$$
\noindent (For the definition of $ \limsup$ see Section~\ref{Sect_Prelim_OnSets}.)\\
Recall that for  $F \in \calF$ the inverse inclusion
$$
\limsup_{(x,z) \to (\xi-, \xi+)} \Pair{F(x)}{F(z)} \subseteq \Pair{F(\xi-)}{F(\xi+)}
$$
follows from Corollary~\ref{Corol_Inclusion1}. Thus Property~2 implies the equality
$$
	\limsup_{(x,z) \to (\xi-, \xi+)} \Pair{F(x)}{F(z)} = \Pair{F(\xi-)}{F(\xi+)}.
$$
\end{remark}

\begin{lemma}\label{Lemma_ContImpliesProp2}
 If $\xi \in (a,b)$ is a point of continuity of $F \in \calF$, then $F$ has Property~2 at $\xi$.
\end{lemma}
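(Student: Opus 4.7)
The plan is to exploit the fact that continuity of $F$ at $\xi$ collapses $\Pair{F(\xi-)}{F(\xi+)}$ to a very simple set, after which the construction of the required sequences becomes almost mechanical.

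First I would observe that continuity at $\xi$ means $F(\xi-)=F(\xi+)=F(\xi)$, so the task reduces to describing $\Pair{F(\xi)}{F(\xi)}$. For any $A\in\Comp$, if $(y,y')\in\Pair{A}{A}$ then either $y\in\PrjXonY{y'}{A}$ or $y'\in\PrjXonY{y}{A}$; but $y,y'\in A$ gives $\dist(y',A)=\dist(y,A)=0$, forcing $y=y'$. Thus
\[
\Pair{F(\xi-)}{F(\xi+)}=\{(y,y)\ :\ y\in F(\xi)\},
\]
and it suffices to approximate each diagonal pair $(y,y)$ by metric pairs.

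Next, fix $y\in F(\xi)$ and choose arbitrary strictly monotone sequences $\xi_n^-\nearrow\xi$ and $\xi_n^+\searrow\xi$ in $[a,b]$. By continuity of $F$ at $\xi$, $\lim_{n\to\infty}F(\xi_n^-)=\lim_{n\to\infty}F(\xi_n^+)=F(\xi)$ in the Hausdorff metric. Pick $y_n^-\in\PrjXonY{y}{F(\xi_n^-)}$; then $|y_n^--y|=\dist(y,F(\xi_n^-))\to\dist(y,F(\xi))=0$, so $y_n^-\to y$. Now define $y_n^+\in\PrjXonY{y_n^-}{F(\xi_n^+)}$, which by construction gives $(y_n^-,y_n^+)\in\Pair{F(\xi_n^-)}{F(\xi_n^+)}$.

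It remains to check $y_n^+\to y$. Since $|y_n^+-y_n^-|=\dist(y_n^-,F(\xi_n^+))$, and this distance depends continuously on the point and on the set (via the inequality $|\dist(y_n^-,F(\xi_n^+))-\dist(y,F(\xi))|\le |y_n^--y|+\haus(F(\xi_n^+),F(\xi))$), we have $|y_n^+-y_n^-|\to\dist(y,F(\xi))=0$, so $y_n^+\to y$ as well. This verifies Property~2 for the pair $(y,y)$, completing the proof. I do not anticipate any real obstacle here: the main conceptual step is recognising that continuity trivialises $\Pair{F(\xi-)}{F(\xi+)}$, after which one only needs the standard Hausdorff-continuity of the distance function.
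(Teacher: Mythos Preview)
Your proof is correct and follows essentially the same route as the paper: identify $\Pair{F(\xi-)}{F(\xi+)}$ with the diagonal $\{(y,y):y\in F(\xi)\}$, choose $y_n^-\in\PrjXonY{y}{F(\xi_n^-)}$ and $y_n^+\in\PrjXonY{y_n^-}{F(\xi_n^+)}$, and use Hausdorff convergence to conclude $y_n^\pm\to y$. The only cosmetic difference is in the last estimate, where the paper bounds $|y_n^+-y_n^-|$ by $\haus(F(\xi_n^+),F(\xi_n^-))$ via~\eqref{haus_MetrPair} and then applies the triangle inequality for $\haus$, while you use the Lipschitz continuity of $\dist(\cdot,\cdot)$ directly; both arguments are equivalent.
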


\begin{proof}
	Since $	F(\xi) = F(\xi-)=F(\xi+)$, any metric pair in $\Pair{F(\xi-)}{F(\xi+)}$ is of the form $(y,y)$, 
	with $y\in F(\xi)$. For a fixed $(y,y) \in \Pair{F(\xi-)}{F(\xi+)}$, take arbitrary sequences $\{\xi_n^-\}_{n \in \N}$, $\{\xi_n^+\}_{n \in \N}$ satisfying $\xi_n^- < \xi < \xi_n^+$,\, 
	$\lim_{n \to \infty} \xi_n^- = \xi = \lim_{n \to \infty} \xi_n^+$. 
	Let $y_n^- \in \PrjXonY{y}{F(\xi_n^-)}$, $y_n^+ \in \PrjXonY{y_n^-}{F(\xi_n^+)}$, $n\in \N$. 
	By construction $ (y_n^-, y_n^+) \in \Pair{F(\xi_n^-)}{F(\xi_n^+)} $.
Since $ (y_n^-, y) \in \Pair{F(\xi_n^-)}{F(\xi)}$, by~\eqref{haus_MetrPair} and the continuity of $F$ at $\xi$ we obtain
$|y_n^- - y| \le \haus( F(\xi_n^-) , F(\xi) ) \to 0$ as $n\to \infty$.

Using the triangle inequality and~\eqref{haus_MetrPair}, we have
$$
|y_n^+ - y| \le |y_n^+ - y_n^- | + |y_n^- - y| \le \haus( F(\xi_n^+) , F(\xi_n^-)) + \haus( F(\xi_n^-) , F(\xi)).
$$
By the triangle inequality for $\haus( F(\xi_n^+) , F(\xi_n^-))$ and the continuity of $F$ at $\xi$ we obtain
$$
|y_n^+ - y| \le \haus( F(\xi_n^+) , F(\xi)) +  2 \, \haus(F(\xi),  F(\xi_n^-)) \to 0 \quad \text{as} \quad n\to \infty,
$$
implying that $\lim_{n \to \infty} y_n^+ = y$.

\end{proof}

The next two examples, taken from~\cite[Section~7]{BDFM:2021}, show that Property~1 and Property~2 are not 
necessarily satisfied by all functions in~$\calF$.

The first example (\cite[Example~7.3]{BDFM:2021})} presents a multifunction $F: [a,b] \to {\mathrm{K}(\R^2)}$, $F \in \mathcal{F}[a,b]$ that does not satisfy Property~1 at a given $\xi \in (a,b)$. Consider
$$
F(t) = \begin{cases}
	B(-2,2), & t \in [a,\xi), \\
	B(-2,2) \cup \{(0,0)\}  \cup B(2,2), & t = \xi, \\
	B(2,2), & t \in (\xi,b],
\end{cases}
$$
where $B(x_1,x_2)$ denotes the closed unit disc with center at $(x_1,x_2)$.
For its metric selection
$$
s(t) = \begin{cases}
	(-2 + \frac{\sqrt{2}}{2}, 2 - \frac{\sqrt{2}}{2}), & t \in (a,\xi),  \\
	(0,0), & t = \xi,\\
	(2 - \frac{\sqrt{2}}{2}, 2 - \frac{\sqrt{2}}{2}), & t \in (\xi,b].
\end{cases}
$$
it is shown that $(s(\xi-), s(\xi+)) \not\in  \Pair{F(\xi-)}{F(\xi+)}$, and that ${\frac{1}{2} ( s(\xi-) + s(\xi+)) = (0, 2 - \frac{\sqrt{2}}{2}) \in A_F(\xi)}$, but does not belong to ${\frac{1}{2} F(\xi-) \oplus \frac{1}{2} F(\xi+)}$. Thus $A_F(\xi) \not\subseteq \frac{1}{2} F(\xi-)\oplus \frac{1}{2} F(\xi+)$. Clearly, $F$ satisfies Property~2 at $\xi$ since $F$ is constant on each of the intervals $[a,\xi)$ and $(\xi,b]$: for any pair $(y^-, y^+) \in \Pair{F(\xi-)}{F(\xi+)}$ and an arbitrary suitable choice of $\{\xi_n^-\}_{n \in \N}$, $\{\xi_n^+\}_{n \in \N}$  we can take constant sequences $y_n^- = y^-$, $y_n^+ = y^+$, $n \in \N$.

It is easy to modify $F$ to a multifunction $\widetilde F$ satisfying both Property~1 and Property~2 at $\xi$, 
$$
\widetilde F(t) = \begin{cases}
	B(-2,2), & t \in [a,\xi), \\
	B(-2,2) \cup B(2,2), & t = \xi, \\
	B(2,2), & t \in (\xi,b].
\end{cases}
$$
\medskip

An example of a multifunction $G: [a,b] \to {\mathrm{K}(\R)}$, $G \in \mathcal{F}[a,b]$ that satisfies Property~1 but does not satisfy Property~2 at a given $\xi \in (a,b)$ can be found in~\cite[Example~7.4]{BDFM:2021}:
$$
G(t) = \begin{cases}
	\left\{ -\frac{1}{4}, 0, \frac{1}{4} \right\}, & t \in [a,\xi),\\
	\left\{ -1, -\frac{1}{4}, 0, \frac{1}{4}, 1 \right\}, & t = \xi, \\
	\left\{ -1 + t - \xi, 1 + t - \xi \right\}, & t \in (\xi,b].
\end{cases}
$$
It is easy to see that $(0,1) \in \Pair{G(\xi-)}{G(\xi+)}$, so that $\frac{1}{2} \in \frac{1}{2} G(\xi-) \oplus \frac{1}{2} G(\xi+)$. We showed that there is no metric selection $s$ of $G$ such that $\frac{1}{2} = \frac{1}{2}(s(x-) + s(x+))$. Thus,  for this function  $A_G(\xi) \not\supseteq \frac{1}{2} G(\xi-)\oplus \frac{1}{2} G(\xi+)$. A similar argument as in~\cite{BDFM:2021} can be used to show that there are no sequences $\{\xi_n^-\}_{n \in \N}$, $\{\xi_n^+\}_{n \in \N}$ with  $\xi_n^- < \xi < \xi_n^+$, $\xi_n^\pm \to \xi$,  
$\{y_n^-\}_{n \in \N} $ with $y_n^- \in G(\xi_n^-)$, $y_n^- \to 0$,  $\{y_n^+\}_{n \in \N}$ with $y_n^+ \in G(\xi_n^+)$, $y_n^+ \to 1$,  such that $(y_n^-, y_n^+) \in \Pair{G(\xi_n^-)}{G(\xi_n^+)}$, $n \in \N$.

A modified function
$$
\widetilde G(t) = \begin{cases}
	\left\{ -\frac{1}{4}, 0, \frac{1}{4} \right\}, & t \in [a,\xi),\\
	\left\{ -1, -\frac{1}{4}, 0, \frac{1}{4}, 1 \right\}, & t = \xi, \\
	\left\{ -1, 1 \right\}, & t \in (\xi,b],
\end{cases}
$$
satisfies both Properties~1 and~2 at $\xi$.

Now we state the main result of the paper. 

\begin{theorem}\label{Theorem_MainResult}
		Let $F \in \calF$ satisfy Property~1 and Property~2 at $x\in (a,b)$. Then
		$$ 
		A_F(x) = \frac{1}{2} F(x-)\oplus \frac{1}{2} F(x+). 
		$$
\end{theorem}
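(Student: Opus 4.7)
The proof proceeds by establishing both inclusions, which reduce to (a) showing that $(s(x-),s(x+))\in\Pair{F(x-)}{F(x+)}$ for every $s\in\setMS$, and (b) showing that every pair in $\Pair{F(x-)}{F(x+)}$ is realized as $(s(x-),s(x+))$ for some $s\in\setMS$.

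For (b), fix $(y^-, y^+) \in \Pair{F(x-)}{F(x+)}$ and apply Property~2 to extract sequences $\xi_n^- \uparrow x$, $\xi_n^+ \downarrow x$, $y_n^\pm \to y^\pm$ with $(y_n^-, y_n^+) \in \Pair{F(\xi_n^-)}{F(\xi_n^+)}$. For each $n$ construct a partition $\chi_n$ of $[a,b]$ with $|\chi_n| \to 0$ containing $\xi_n^-, \xi_n^+$ as consecutive points (so $x \notin \chi_n$), together with a chain function $c_n$ of $F$ on $\chi_n$ such that $c_n(\xi_n^-) = y_n^-$ and $c_n(\xi_n^+) = y_n^+$, extended outward via successive projections. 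Helly's selection principle provides a pointwise-convergent subsequence $c_{n_k} \to s$, and $s \in \setMS$ by construction. To verify $s(x-) = y^-$, for $t\in (x-\delta,\xi_n^-]$ and $x_j\in\chi_n$ with $t\in[x_j,x_{j+1})$ the chain drift satisfies $|c_n(t) - y_n^-| = |c_n(x_j) - c_n(\xi_n^-)| \le \Var{x_j}{\xi_n^-}{F} \le \Var{x-\delta}{x}{\widetilde F^-}$, where $\widetilde F^-$ is the left-continuous modification of $F$ at $x$ used in the proof of Proposition~\ref{Propos_RightContSelections}. Since $\widetilde F^-$ is left-continuous at $x$, its variation $\Var{x-\delta}{x}{\widetilde F^-}$ tends to $0$ as $\delta\to 0^+$, so $|c_n(t)-y^-|$ can be made arbitrarily small on a fixed left neighborhood of $x$ for all $n$ sufficiently large; passing to the limit yields $s(x-) = y^-$. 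The argument for $s(x+) = y^+$ is symmetric using $\widetilde F^+$.

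For (a), take $s \in \setMS$ with $s=\lim c_n$, $|\chi_n| \to 0$, and set $\alpha_n = \max(\chi_n \cap [a, x))$, $\beta_n = \min(\chi_n \cap (x, b])$. Passing to a subsequence, I may assume $c_n(\alpha_n) \to a^* \in F(x-)$ and $c_n(\beta_n) \to b^* \in F(x+)$, and that either $x \notin \chi_n$ for all $n$ or $x \in \chi_n$ for all $n$. In the first case $\alpha_n,\beta_n$ are consecutive in $\chi_n$, $(c_n(\alpha_n),c_n(\beta_n))\in\Pair{F(\alpha_n)}{F(\beta_n)}$, and Lemma~\ref{Lemma_Limit_of_MP_is_MP} directly gives $(a^*,b^*)\in\Pair{F(x-)}{F(x+)}$. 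In the second case, Property~1 forces $c_n(x)\in F(x-)\cup F(x+)$, and I may further assume $c_n(x)\in F(x-)$ for all $n$ (the case $c_n(x)\in F(x+)$ is symmetric); the straightforward variant of Lemma~\ref{Lemma_Help} for shrinking the second set of a metric pair to a subset still containing its second coordinate turns $(c_n(\alpha_n),c_n(x))\in\Pair{F(\alpha_n)}{F(x)}$ into $(c_n(\alpha_n),c_n(x))\in\Pair{F(\alpha_n)}{F(x-)}$, whose limit in $\Pair{F(x-)}{F(x-)}=\{(y,y):y\in F(x-)\}$ forces $a^* = \lim c_n(x)$; the same restriction applied to $(c_n(x),c_n(\beta_n))\in\Pair{F(x)}{F(\beta_n)}$ then yields $(a^*,b^*)\in\Pair{F(x-)}{F(x+)}$ in the limit. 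Finally, the same drift estimate used in (b), applied with $\widetilde F^-$ on the left of $x$ and $\widetilde F^+$ on the right, identifies $a^* = s(x-)$ and $b^* = s(x+)$.

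The technical heart of the proof is the drift estimate linking chain-function values at partition points flanking $x$ to the one-sided limits of $s$; the obstacle specific to direction (a) is ensuring that $c_n(x)$ (when $x\in\chi_n$) stays in $F(x-)\cup F(x+)$, which is exactly Property~1, since without it the chain value at $x$ can escape into $F(x)\setminus(F(x-)\cup F(x+))$ and destroy the metric-pair structure in the limit (as in the first example preceding the theorem). The role of Property~2 in direction (b) is to make every element of $\Pair{F(x-)}{F(x+)}$ reachable by a chain with a bracketing partition, whose failure is precisely the obstruction in the second example preceding the theorem.
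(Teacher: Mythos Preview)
Your proof is correct and follows essentially the same route as the paper: the paper splits the two inclusions into Propositions~\ref{Prop_FirstInclusion} and~\ref{Propos_SecondInclusion} with the identical chain-function construction for~(b) and the same case split (whether $\xi\in\chi_n$ or not, then whether $c_n(\xi)\in F(\xi-)$ or $F(\xi+)$) for~(a). The only cosmetic difference is that your drift estimate is phrased via the variation of the one-sided modifications $\widetilde F^{\pm}$, while the paper packages the same bound through the quasi-moduli $\varpi^{\pm}(v_F,\xi,\cdot)$ and a dedicated lemma; the two formulations are equivalent.
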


By Remark~\ref{Remark_LimitSet=F} the statement is trivial if $x$ is a point of continuity of $F$. If $x$ is a point of discontinuity, it is a direct consequence of the following two propositions. 

\begin{propos}\label{Prop_FirstInclusion} 
		Let $F \in \calF$ satisfy Property~2 at a point of discontinuity $\xi \in (a,b)$. Then 
		$$ 
		\frac{1}{2} F(\xi-)\oplus \frac{1}{2} F(\xi+) \subseteq A_F(\xi).
		$$
\end{propos}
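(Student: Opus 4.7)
The plan is, given an arbitrary metric pair $(y^-,y^+)\in\Pair{F(\xi-)}{F(\xi+)}$, to construct a single metric selection $s\in\setMS$ whose one-sided limits at $\xi$ are exactly $s(\xi-)=y^-$ and $s(\xi+)=y^+$. Once this is achieved, $\frac12(y^-+y^+)=\frac12(s(\xi-)+s(\xi+))\in A_F(\xi)$, which is the required inclusion.

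To produce such an $s$, first apply Property~2 to obtain sequences $\xi_n^-<\xi<\xi_n^+$ with $\xi_n^\pm\to\xi$ and $y_n^\pm\to y^\pm$, satisfying $(y_n^-,y_n^+)\in \Pair{F(\xi_n^-)}{F(\xi_n^+)}$. Choose partitions $\chi_n$ of $[a,b]$ containing $\xi_n^-$ and $\xi_n^+$ as two \emph{consecutive} nodes, with $|\chi_n|\to 0$. Starting from the prescribed metric pair $(y_n^-,y_n^+)$ at $\xi_n^-,\xi_n^+$, iterated metric projections onto the values of $F$ at successive nodes (leftward from $\xi_n^-$ and rightward from $\xi_n^+$) yield a full metric chain $\phi_n$ on $\chi_n$, in the spirit of Remark~\ref{Remar_ThereIsChain}. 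The associated chain function $c_n$ from~\eqref{def_ChainFunc} satisfies $c_n(\xi_n^-)=y_n^-$ and $c_n(\xi_n^+)=y_n^+$, is uniformly bounded by $\|F\|_\infty$, and has total variation bounded by $V_a^b(F)$. Helly's Selection Theorem then extracts a subsequence (not relabeled) converging pointwise on $[a,b]$ to a limit $s$, which is a metric selection of $F$ since $|\chi_n|\to 0$.

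It remains to verify $s(\xi-)=y^-$; the argument for $s(\xi+)=y^+$ is symmetric. Fix $x\in[a,\xi)$ and, for $n$ large enough so that $\xi_n^->x$, let $x_n^*$ be the largest node of $\chi_n$ with $x_n^*\le x$. Since $c_n$ is piecewise constant on $\chi_n$ with jumps bounded by the Hausdorff distances between consecutive values of $F$, telescoping yields
$$
|c_n(x)-c_n(\xi_n^-)|\le \Var{x_n^*}{\xi_n^-}{c_n}\le v_F(\xi_n^-)-v_F(x_n^*).
$$
Passing to the limit along the chosen subsequence, using $c_n(\xi_n^-)=y_n^-\to y^-$, $c_n(x)\to s(x)$, and the convergences $v_F(\xi_n^-)\to v_F(\xi-)$ and $v_F(x_n^*)\to v_F(x-)$ (from monotonicity of $v_F$ together with $x_n^*\to x^-$ and $\xi_n^-\to\xi^-$), gives $|s(x)-y^-|\le v_F(\xi-)-v_F(x-)$. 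Sending $x\to\xi^-$, once more by monotonicity, $v_F(x-)\to v_F(\xi-)$, so $|s(x)-y^-|\to 0$ and thus $s(\xi-)=y^-$.

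The main obstacle is that Helly only delivers pointwise convergence, while the desired identities concern one-sided limits of $s$ at the single point $\xi$, which is never a node of $\chi_n$ in this construction. Transferring the data $y_n^\pm$ sitting at the moving anchors $\xi_n^\pm$ into limits of the one fixed function $s$ is precisely what the variation estimate accomplishes: the monotone function $v_F$ absorbs both the mesh error at $x$ and the movement of $\xi_n^\pm$ toward $\xi$. Property~2 is exactly what makes this strategy possible, as it provides approximating metric pairs at nearby arguments that can be threaded into a single valid metric chain.
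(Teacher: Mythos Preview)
Your proof is correct and follows essentially the same approach as the paper: use Property~2 to obtain approximating metric pairs $(y_n^-,y_n^+)$ at consecutive nodes $\xi_n^-,\xi_n^+$, extend them to chain functions by iterated projections, extract a metric selection $s$ via Helly, and verify $s(\xi\pm)=y^\pm$ through a variation estimate controlled by $v_F$. The paper packages the variation step into a separate lemma expressed via the one-sided quasi-moduli $\varpi^\pm(v_F,\xi,\cdot)$, whereas you telescope directly with $v_F$; this is cosmetic. One small imprecision: $x_n^*$ need not lie strictly to the left of $x$, so the correct statement is $\liminf_n v_F(x_n^*)\ge v_F(x-)$ rather than $v_F(x_n^*)\to v_F(x-)$, but this still gives $|s(x)-y^-|\le v_F(\xi-)-v_F(x-)$ and the argument goes through unchanged.
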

	
\begin{propos}\label{Propos_SecondInclusion} 
		Let $F \in \calF$ satisfy Property~1 and Property~2 at a point of discontinuity $\xi \in (a,b)$. Then 
		$$ A_F(\xi)  \subseteq \frac{1}{2} F(\xi-)\oplus \frac{1}{2} F(\xi+).$$
\end{propos}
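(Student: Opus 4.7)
The plan is to show that for every $s \in \setMS$, the pair $(s(\xi-), s(\xi+))$ lies in $\Pair{F(\xi-)}{F(\xi+)}$; by the definition of the metric linear combination this gives the claimed inclusion $A_F(\xi) \subseteq \frac{1}{2} F(\xi-)\oplus \frac{1}{2} F(\xi+)$. Write $s = \lim_{k\to\infty} c_k$ pointwise, where $c_k = c_{\chi_k,\phi_k}$ are chain functions on partitions $\chi_k$ with $|\chi_k| \to 0$, and arrange $\xi \in \chi_k$ for every $k$. Let $\xi_k^- < \xi < \xi_k^+$ denote the partition points of $\chi_k$ adjacent to $\xi$, and set $y_k^- = c_k(\xi_k^-)$, $y_k^0 = c_k(\xi)$, $y_k^+ = c_k(\xi_k^+)$. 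Then $(y_k^-, y_k^0) \in \Pair{F(\xi_k^-)}{F(\xi)}$ and $(y_k^0, y_k^+) \in \Pair{F(\xi)}{F(\xi_k^+)}$, while $y_k^0 \to s(\xi)$ by pointwise convergence. Since $\Graph(F)$ is bounded, I extract a subsequence along which $y_k^- \to \bar y^-$ and $y_k^+ \to \bar y^+$. Lemma~\ref{Lemma_Limit_of_MP_is_MP} yields $(\bar y^-, s(\xi)) \in \Pair{F(\xi-)}{F(\xi)}$ and $(s(\xi), \bar y^+) \in \Pair{F(\xi)}{F(\xi+)}$; in particular $\bar y^- \in F(\xi-)$ and $\bar y^+ \in F(\xi+)$.

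By Property~1, $s(\xi) \in F(\xi-) \cup F(\xi+)$. Consider the case $s(\xi) \in F(\xi-)$, the case $s(\xi) \in F(\xi+)$ being handled symmetrically by reversing left and right roles. Lemma~\ref{Lemma_Help} applied with $\hat A = F(\xi-) \subset A = F(\xi)$ to $(s(\xi), \bar y^-) \in \Pair{F(\xi)}{F(\xi-)}$ yields $(s(\xi), \bar y^-) \in \Pair{F(\xi-)}{F(\xi-)}$, and since metric pairs of a set with itself are diagonal, $\bar y^- = s(\xi)$. The same lemma upgrades $(s(\xi), \bar y^+) \in \Pair{F(\xi)}{F(\xi+)}$ to $(s(\xi), \bar y^+) \in \Pair{F(\xi-)}{F(\xi+)}$, which is a bona fide metric pair of the one-sided limits.

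To finish, I identify $(s(\xi), \bar y^+)$ with $(s(\xi-), s(\xi+))$. For the left coordinate, the equality $\bar y^- = s(\xi) \in F(\xi-)$ together with Lemma~\ref{Lemma_Help} lets me recast the chain functions $c_k$ (after at most a small modification near $\xi$, in the spirit of the construction in the proof of Proposition~\ref{Propos_RightContSelections}(i)) as chain functions of the auxiliary SVF $\widetilde F^-$ given by $\widetilde F^-(t) = F(t)$ for $t \ne \xi$ and $\widetilde F^-(\xi) = F(\xi-)$; since $\widetilde F^-$ is left continuous at $\xi$, the quoted Theorem~4.7 of \cite{BDFM:2021} (see Remark~\ref{Remark_cont_1}) forces $s$ itself to be left continuous at $\xi$, giving $s(\xi-) = s(\xi)$. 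For the right coordinate, Property~2 in the equivalent form of Remark~\ref{Remark_Prop2} supplies sequences $\xi_n^- \to \xi^-$ and $\xi_n^+ \to \xi^+$ with metric pairs $(\alpha_n,\beta_n) \in \Pair{F(\xi_n^-)}{F(\xi_n^+)}$ approximating $(s(\xi), \bar y^+)$; combining these with the pointwise convergence of the chain $c_k$ just to the right of $\xi$ and using Corollary~\ref{Corol_Inclusion1} as a consistency check, I identify $\bar y^+$ with $s(\xi+)$. Hence $(s(\xi-), s(\xi+)) = (s(\xi), \bar y^+) \in \Pair{F(\xi-)}{F(\xi+)}$, as required.

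The principal technical obstacle is the identification $\bar y^+ = s(\xi+)$: the quantity $\bar y^+$ is a subsequential limit of chain values $y_k^+ \in F(\xi_k^+)$ at partition points $\xi_k^+$ that themselves depend on $k$, whereas $s(\xi+)$ is the right limit of $s$ at the fixed point $\xi$. Interchanging these two limiting processes—the chain-level limit and the pointwise limit—is exactly what Property~2 enables, since it guarantees that the metric pair structure at $(\xi-,\xi+)$ is faithfully inherited from nearby metric pair structures and cannot harbour "extra" pairs invisible at the nearby level.
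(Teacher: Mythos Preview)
Your argument has a genuine gap at exactly the place you flag as ``the principal technical obstacle''. The identification $\bar y^{+}=s(\xi+)$ is an interchange of two limits: the pointwise limit $c_k\to s$ (at each \emph{fixed} $t>\xi$) and the chain-level limit $y_k^{+}=c_k(\xi_k^{+})\to\bar y^{+}$ (along partition nodes $\xi_k^{+}$ that \emph{vary} with $k$). Property~2 does not bridge this: it produces auxiliary sequences $(\alpha_n,\beta_n)\in\Pair{F(\xi_n^-)}{F(\xi_n^+)}$ approximating $(s(\xi),\bar y^{+})$, but these sequences have no a priori relation to the specific chain functions $c_k$ that define $s$, so they carry no information about $s(\xi+)$. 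Invoking Corollary~\ref{Corol_Inclusion1} ``as a consistency check'' only re-establishes that $(s(\xi),\bar y^{+})\in\Pair{F(\xi-)}{F(\xi+)}$, which you already know. In the paper the identification is obtained directly, without Property~2, via the variation bound of Lemma~\ref{Lemma_Var(chain)}: for small $\delta>0$ and $n$ large,
\[
|s(\xi+\delta)-\bar y^{+}|\le |s(\xi+\delta)-c_n(\xi+\delta)|+V_{\xi_n^{+}}^{\xi+\delta}(c_n)+|y_n^{+}-\bar y^{+}|
\le \delta+\NewRightLocalModul{v_F}{\xi}{\delta}+\delta,
\]
which tends to $0$. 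Indeed, Property~2 plays no role anywhere in the paper's proof of this inclusion; only Property~1 and the variation lemma are used.

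Two further points. First, you cannot simply ``arrange $\xi\in\chi_k$'': the partitions come with the given metric selection, and inserting $\xi$ while keeping the same step value need not yield a chain function (the value $c_k(\xi)$ must lie in $F(\xi)$). The paper accordingly treats the case $\xi\notin\chi_n$ separately. Second, your diagonal observation $\bar y^{-}=s(\xi)$ from $\Pair{F(\xi-)}{F(\xi-)}$ is neat, and your left-continuity argument via $\widetilde F^{-}$ can in fact be made rigorous (modify $c_k$ at and to the right of $\xi$ by projections, pass to a Helly subsequence, and apply Theorem~4.7 of \cite{BDFM:2021} to the limit, which coincides with $s$ on $[a,\xi]$); but note the left/right asymmetry emphasized in Remark~\ref{Remark_cont_1}: there is no analogous ``all metric selections are right continuous'' statement, so the same device does \emph{not} handle the right side, and you are back to needing the variation estimate.
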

The proofs of these propositions are postponed to the Appendix.

\medskip
 

\medskip



\medskip


\appendix
{\Large\bf\title{Appendix}}
\smallskip

Here we present the proofs of the two propositions of~Section 4, based on a technical lemma stated and proved next.

\section{Lemma}

\begin{lemma}\label{Lemma_Var(chain)} 
	Let $F \in \calF$, and let $c$ be a chain function of $F$  based on a partition ${\chi = \{a = x_0 < \cdots < x_n = b\} }$.  
	\begin{enumerate}
		\item[{(i)}]  If $a \le \alpha < x_j \le \beta \le b$ with some $x_j \in \chi$, then $V_{x_j}^{\beta}(c) \le \NewRightLocalModul{v_F}{\alpha}{\beta -\alpha}$,
		\item[{(ii)}]  If $a \le \alpha \le x_j < \beta \le b$ with some $x_j \in \chi$, then $V_{\alpha}^{x_j}(c) \le \NewLeftLocalModul{v_F}{\beta}{\beta -\alpha + |\chi|}$.
	\end{enumerate}
\end{lemma}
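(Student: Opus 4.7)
The strategy is to exploit the piecewise-constant nature of a chain function: since $c$ takes the value $y_i$ on $[x_i,x_{i+1})$ and $y_n$ at $x_n$, its total variation on any subinterval reduces to a sum of jump sizes $|y_{i+1}-y_i|$ at the partition points lying inside the interval. Each such jump is controlled by the metric-pair defining property together with~\eqref{haus_MetrPair}: from $(y_i,y_{i+1})\in\Pair{F(x_i)}{F(x_{i+1})}$ one gets $|y_{i+1}-y_i|\le\haus(F(x_i),F(x_{i+1}))\le V_{x_i}^{x_{i+1}}(F)=v_F(x_{i+1})-v_F(x_i)$, since the variation of $F$ on $[x_i,x_{i+1}]$ is at least the value obtained for the trivial two-point partition. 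The key observation is that the resulting bound telescopes to a single difference of $v_F$-values, and monotonicity of $v_F$ combined with the definitions of $\varpi^{\pm}(v_F,\cdot,\cdot)$ then yields the asserted estimates.

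For (i), let $x_k$ be the largest partition point with $x_k\le\beta$ (so $k\ge j$, and $k=n$ if $\beta=b$). The variation of $c$ on $[x_j,\beta]$ picks up precisely the jumps at $x_{j+1},\dots,x_k$, and telescoping through the per-jump bound above gives
$$
\Var{x_j}{\beta}{c}\le v_F(x_k)-v_F(x_j)\le v_F(x_k)-v_F(\alpha+),
$$
where the last inequality uses $\alpha<x_j$ and the monotonicity of $v_F$. Since $x_k\in(\alpha,\beta]\cap[a,b]$, the right-hand side is bounded by $\NewRightLocalModul{v_F}{\alpha}{\beta-\alpha}$ directly from the definition of the right local quasi-modulus.

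For (ii), let $x_l$ be the largest partition point with $x_l\le\alpha$. The same telescoping argument, now running across jumps at $x_{l+1},\dots,x_j$, yields
$$
\Var{\alpha}{x_j}{c}\le v_F(x_j)-v_F(x_l)\le v_F(\beta-)-v_F(x_l),
$$
using $x_j<\beta$ and monotonicity of $v_F$. The delicate point---and the source of the $|\chi|$ correction in the statement---is that $x_l$ may strictly precede $\alpha$: from $x_{l+1}>\alpha$ and $x_{l+1}-x_l\le|\chi|$ we obtain $x_l\ge\alpha-|\chi|=\beta-(\beta-\alpha+|\chi|)$, so $x_l\in[\beta-(\beta-\alpha+|\chi|),\beta)\cap[a,b]$, and the desired bound $\NewLeftLocalModul{v_F}{\beta}{\beta-\alpha+|\chi|}$ follows from the definition. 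The main ``obstacle'' is therefore not analytic but combinatorial: carefully identifying the relevant partition indices on each side, and accounting for the asymmetry between (i), where $x_j$ already sits inside $(\alpha,\beta]$, and (ii), where $x_l$ can slip just outside $[\alpha,\beta]$ and forces the extra $|\chi|$ to appear in the argument of the local quasi-modulus.
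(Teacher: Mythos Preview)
Your proposal is correct and follows essentially the same approach as the paper's proof: reduce the variation of the piecewise-constant chain function to a telescoping sum of jump sizes, bound each jump by a Hausdorff distance via~\eqref{haus_MetrPair}, pass to $v_F$-differences, and conclude using monotonicity of $v_F$ together with the definitions of the one-sided local quasi-moduli. Your identification of the extremal partition indices ($x_k$ in (i), $x_l$ in (ii)) and your explanation of why the extra $|\chi|$ appears in (ii) match the paper's argument exactly, with only cosmetic differences in indexing.
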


\begin{proof}
	${}$\\
	(i). Let $\beta \in [x_i, x_{i+1})$ with some $i \ge j$. If $i=j$, then $c(t) = c(x_j)$ for all $t \in [x_j,\beta]$, and thus $V_{x_j}^{\beta}(c) = 0$. Therefore suppose that $i > j$. Then $c(t) = c(x_i)$ for all $t \in [x_i,\beta]$, and we have
	\begin{align*}
		V_{x_j}^{\beta}(c) & = V_{x_j}^{x_i}(c) 
		=  \sum_{k=j}^{i-1} |c(x_{k+1}) - c(x_k)|	
		\le \sum_{k=j}^{i-1}  \haus(F(x_{k+1}), F(x_k)) \\&
		\le V_{x_j}^{x_i} (F) \le  V_{x_j}^{\beta}(F)
		\le v_F(\beta) - v_F(\alpha+)  = \NewRightLocalModul{v_F}{\alpha}{\beta - \alpha}.
	\end{align*}
	(ii).
	If $\alpha = x_j$, then $V_{\alpha}^{x_j}(c) =0$. So assume that $\alpha < x_j$, then $\alpha \in [x_{i-1}, x_i)$ with some $i \le j$. We have $c(t) = c(x_{i-1})$ for all $t \in [x_{i-1},\alpha]$, and therefore
	\begin{align*}
		V_{\alpha}^{x_j}(c) & 
		= V_{x_{i-1}}^{x_{j}}(c) 
		= \sum_{k = i-1}^{j-1} |c(x_{k+1}) - c(x_k)|
		\le  \sum_{k = i-1}^{j-1} \haus(F(x_{k+1}), F(x_k))\\&
		\le   V_{x_{i-1}}^{x_{j}}(F)
		\le v_F(\beta-) - v_F(x_{i-1})  
		= \NewLeftLocalModul{v_F}{\beta}{\beta - x_{i-1}} 
		\le \NewLeftLocalModul{v_F}{\beta}{\beta - \alpha+|\chi|}.
	\end{align*}
\end{proof}
\medskip


\section{Proof of Proposition~\ref{Prop_FirstInclusion}}
First we recall the statement of the proposition and then prove it.
\medskip

\textit{Let $F \in \calF$ satisfy Property~2 at a point of discontinuity $\xi \in (a,b)$. Then 
	$$ 
	\frac{1}{2} F(\xi-)\oplus \frac{1}{2} F(\xi+) \subseteq A_F(\xi).
	$$
}
\begin{proof}
	It suffices to show that for any $(y^-, y^+) \in \Pair{F(\xi-)}{F(\xi+)}$ there exists a metric selection $s \in \setMS$ satisfying
	$s(\xi-) = y^-$ and $s(\xi+) = y^+$.
	
	Let $\{\xi_n^-\}_{n \in \N} $, $\{\xi_n^+\}_{n \in \N}$ ,
	$\{y_n^-\}_{n \in \N} $, $\{y_n^+\}_{n \in \N}$ be sequences guaranteed by Property 2.
	Consider a sequence of partitions  $\{ \chi_n \}_{n\in \N}$ such that $\xi \notin \chi_n$, $n \in \N$, 
	and $\xi_n^- , \xi_n^+  \in \chi_n$ are the closest points to $\xi$ in $\chi_n$ from the left and from the right respectively. The remaining points in $\chi_n$ are determined arbitrarily in a way that $\displaystyle \lim_{n \to \infty } | \chi_n | =0$. We now define a chain function $c_n$ based on $\chi_n$. Let 
	$c_n(\xi_n^-)=y_n^-$ and  $c_n(\xi_n^+)=y_n^+$. Then $(y_n^-, y_n^+) \in \Pair{F(\xi_n^-)}{F(\xi_n^+)}$, $\displaystyle \lim_{n \to \infty } y_n^- = y^-$  and 
	$\displaystyle \lim_{n \to \infty } y_n^+ = y^+$ by Property~2. 
	The remaining values of $c_n$ on $\chi_n$ are determined by taking a projection of $c_n(x_i)$ on $F(x_{i-1})$ to the left of $\xi_n^{-}$ and a projection of $c_n(x_i)$ on $F(x_{i+1})$ to the right of $\xi_n^{+}$, namely for $x_i \in \chi_n$
	$$
	c_n(x_{i-1}) \in \PrjXonY{c_n(x_i)}{F(x_{i-1})}, \; \ x_i \le \xi_n^- , \quad 
	c_n(x_{i+1}) \in \PrjXonY{c_n(x_i)}{F(x_{i+1})}, \; \ x_i \ge \xi_n^+.
	$$
	By Helly's Selection Principle, and passing to a subsequence if necessary, we obtain a metric selection $s$ of $F$, as the pointwise  limit of that subsequence.
	
	First we prove that $s(\xi+) = y^+$.
	Using the triangle inequality and the fact that $c_n(\xi_n^+)=y_n^+$, we get for a given fixed small $\delta>0$ and $n$ so large that
	$\xi_n^+<\xi+\delta$
	\begin{equation}\label{Formula_1}
		\begin{split}
			|s(\xi+\delta)-y^{+}| & \le
			|s(\xi+\delta)-c_n(\xi+\delta)|+|c_n(\xi+\delta)-c_n(\xi_n^+)|+|c_n(\xi_n^+) - y^{+}| \\&
			\le |s(\xi+\delta)-c_n(\xi+\delta)|+ \Var{\xi_n^+}{\xi+\delta}{c_n} +|y_n^{+}-y^{+}|.
		\end{split}
	\end{equation}	
	By Lemma~\ref{Lemma_Var(chain)}~(i) we get ${\Var{\xi_n^+}{\xi+\delta}{c_n} \le \NewRightLocalModul{v_F}{\xi}{\delta}}$.
	We further increase $n$, if necessary, to guarantee that $|s(\xi+\delta)-c_n(\xi+\delta)| < \delta$, ${|y_n^{+}-y^{+}| < \delta}$. This is possible since $\lim_{n \to \infty}c_n(\xi+\delta)=s(\xi+\delta)$ and $\lim_{n \to \infty}y_n^{+} =y^{+}$.  Therefore~\eqref{Formula_1} leads to
	$$ 
	|s(\xi+\delta)-y^{+}| < 2\delta + \NewRightLocalModul{v_F}{\xi}{\delta}.
	$$
	The right-hand side in the above formula tends to zero as $\delta \to 0^+$, thus $s(\xi+)=y^+$. 
	
	Now we show that $s(\xi-) = y^-$. For that we use an analogue of equation~\eqref{Formula_1} for a fixed small $\delta > 0$ and $n$ so large that ${\xi-\delta < \xi_n^-}$
	\begin{equation}\label{Formula_2}
		\begin{split}
			|s(\xi-\delta)-y^{-}| & \le
			|s(\xi-\delta)-c_n(\xi-\delta)|+|c_n(\xi-\delta)-c_n(\xi_n^-)|+|c_n(\xi_n^-) - y^{-}| \\&
			\le |s(\xi-\delta)-c_n(\xi-\delta)|+ V_{\xi-\delta}^{\xi_n^-}(c_n) +|y_n^{-}-y^{-}|.
		\end{split}
	\end{equation}
	Arguing as above we obtain by Lemma~\ref{Lemma_Var(chain)}~(ii) 
	${	V_{\xi-\delta}^{\xi_n^-}(c_n)\le \NewLeftLocalModul{v_F}{\xi}{\delta + |\chi_n|}}$.
	We further increase $n$, if necessary, to guarantee that $|\chi_n|<\delta$, $|s(\xi-\delta)-c_n(\xi-\delta)| < \delta$, ${|y_n^{-}-y^{-}| < \delta}$. 
	Then~\eqref{Formula_2} leads to
	$$ 
	|s(\xi-\delta)-y^{-}| < 2\delta + \NewLeftLocalModul{v_F}{\xi}{2\delta},
	$$
	whose right-hand side tends to zero as $\delta \to 0^+$. Hence $s(\xi-)=y^-$. 
\end{proof}
\medskip


\section{Proof of Proposition~\ref{Propos_SecondInclusion}}
First we recall the statement of the proposition and then prove it.
\medskip

\textit{
	Let $F \in \calF$ satisfy Property~1 and Property~2 at a point of discontinuity $\xi \in (a,b)$. Then 
	$$ A_F(\xi)  \subseteq \frac{1}{2} F(\xi-)\oplus \frac{1}{2} F(\xi+).$$
}
\begin{proof}
	We have to show that each $s\in\setMS $ satisfies	$( s(\xi-), s(\xi+) ) \in \Pair{F(\xi-)}{F(\xi+)}$. 
	Let $\{ \chi_n \}_{n\in \N}$ be a sequence of partitions and $\{ c_n\}_{n\in \N}$ be a corresponding sequence of chain 
	functions defining $s$, namely $ \displaystyle \lim_{n \to \infty } c_n(x) =s(x)$, $x \in [a,b]$.
	
	\noindent 
	
	Denote by $\xi_n^-$ and $\xi_n^+$ the closest points to $\xi$ in $\chi_n$ from the left and from the right respectively with ${\xi_n^- < \xi < \xi_n^+}$, and denote ${c_n(\xi_n^-)=y_n^-}$, ${c_n(\xi_n^+)=y_n^+}$ and $c_n(\xi) = y_n$. 
	Taking convergent subsequences, if necessary, we obtain that the two limits
	$\displaystyle \lim_{n \to \infty } y_n^-$ and $\displaystyle\lim_{n \to \infty } y_n^+$
	exist; we denote them by $y^-$ and $y^+$ respectively. 
	
	We consider several cases.
	\smallskip
	
	Case 1: There exists a subsequence of $\{\chi_n\}_{n \in \N}$ with $\xi \notin \chi_n$. For simplicity we denote it again by $\{\chi_n\}_{n \in \N}$. 
	In this case ${(y_n^-, y_n^+) \in \Pair{F(\xi_n^-)}{F(\xi_n^+)} }$ by the definition of chain functions.  
	By Lemma~\ref{Lemma_Limit_of_MP_is_MP} $(y^-, y^+) \in \Pair{F(\xi-)}{F(\xi+)}$.
	One can prove that $s(\xi-)=y^-$ and $s(\xi+)=y^+$ following the lines of the proof of Proposition~\ref{Prop_FirstInclusion}.
	\smallskip
	
	Case 2: $\xi \notin \chi_n$ only for finitely many $n \in \N$. In this case there is a subsequence of $\{\chi_n\}_{n \in \N}$ with ${\xi \in \chi_n}$, which we denote again by $\{\chi_n\}_{n \in \N}$. 
	By the definition of chain functions
	${(y_n^-, y_n) \in \Pair{F(\xi_n^-)}{F(\xi)}}$ and ${(y_n, y_n^+) \in \Pair{F(\xi)}{F(\xi_n^+)}}$.
	By construction $y_n \in F(\xi)$ and $\displaystyle \lim_{n \to \infty } y_n = s(\xi)$.
	
	By Property~1 we have two subcases:
	\begin{itemize}
		\item[(2a)] $y_n \in  F(\xi-)$ for infinitely many $n \in \N${\color{magenta}.}
		\item[(2b)] $y_n \in  F(\xi-)$ for only finitely many  $n \in \N$. In this case
		$y_n \in  F(\xi+)$ for infinitely many $n \in \N${\color{magenta}.}
	\end{itemize}
	
	Subcase (2a): We consider a subsequence of   $\{\chi_n\}_{n \in \N}$ with $y_n \in  F(\xi-)$, which we denote again by $\{\chi_n\}_{n \in \N}$.
	We have $(y_n, y_n^+) \in \Pair{F(\xi)}{F(\xi_n^+)}$. By Lemma~\ref{Lemma_Help} ${(y_n, y_n^+) \in \Pair{F(\xi-)}{F(\xi_n^+)} }$ and by Lemma~\ref{Lemma_Limit_of_MP_is_MP}\, $(s(\xi), y^+) \in \Pair{F(\xi-)}{F(\xi+)}$. To obtain the claim, it is enough to show that $s(\xi-)=s(\xi)$,\ $s(\xi+)=y^+$.
	
	To prove that $s(\xi-)=s(\xi)$, fix a small $\delta >0$ and let $n$ be so large that $\xi - \delta < \xi_n^-$. We estimate
	\begin{equation}\label{Formula_3}
		\begin{split}
			|s(\xi-\delta)-s(\xi)| & \le
			|s(\xi-\delta)-c_n(\xi-\delta)|+|c_n(\xi-\delta)-c_n(\xi_n^-)| + |c_n(\xi_n^-)- c_n(\xi)|+ |c_n(\xi)-s(\xi)|\\&
			\le |s(\xi-\delta)-c_n(\xi-\delta)|+ V_{\xi-\delta}^{\xi_n^-}(c_n) +|y_n^- - y_n| + |c_n(\xi)-s(\xi)|.
		\end{split}
	\end{equation}
	By Lemma~\ref{Lemma_Var(chain)}~(ii) we have 
	$V_{\xi-\delta}^{\xi_n^-}(c_n) \le \NewLeftLocalModul{v_F}{\xi}{\delta + |\chi_n|}$. 
	Since $(y_n^-, y_n) \in \Pair{F(\xi_n^-)}{F(\xi)}$,  Lemma~\ref{Lemma_Help} implies that $(y_n^-, y_n) \in \Pair{F(\xi_n^-)}{F(\xi-)}$.
	By~\eqref{haus_MetrPair} and by Result~\ref{Lemma_QuasiMod_f<=QuasiMod_v_f}
	$$
	|y_n^- - y_n| \le \haus (F(\xi_n^-), F(\xi-)) \le \NewLeftLocalModul{F}{\xi}{|\chi_n|} 
	\le \NewLeftLocalModul{v_F}{\xi}{|\chi_n|}.
	$$
	We further increase $n$, if necessary, to guarantee that $|\chi_n|<\delta$, $|s(\xi-\delta)-c_n(\xi-\delta)| < \delta$, $|s(\xi)-c_n(\xi)| < \delta$. 
	Then~\eqref{Formula_3} gives
	$$ 
	|s(\xi-\delta)-s(\xi)| < 2\delta + \NewLeftLocalModul{v_F}{\xi}{2\delta} + \NewLeftLocalModul{v_F}{\xi}{\delta},
	$$
	and the right-hand side tends to zero as $\delta \to 0^+$. Hence $s(\xi-)=s(\xi)$. 
	
	By similar arguments we show that $s(\xi+)=y^+$. Indeed, for a small fixed $\delta >0$ 
	and large $n$ we have 
	\begin{equation}\label{Formula_11} \nonumber
		\begin{split}
			|s(\xi+\delta)-y^{+}| & \le
			|s(\xi+\delta)-c_n(\xi+\delta)|+|c_n(\xi+\delta) - c_n(\xi_n^+)|+|c_n(\xi_n^+) - y^{+}| \\&
			\le |s(\xi+\delta)-c_n(\xi+\delta)|+ \Var{\xi_n^+}{\xi+\delta}{c_n} +|y_n^{+}-y^{+}|.
		\end{split}
	\end{equation}
		Using the estimate $\Var{\xi_n^+}{\xi+\delta}{c_n} \le  \NewRightLocalModul{v_F}{\xi}{\delta}$, obtained by Lemma~\ref{Lemma_Var(chain)} (i), the facts that ${\lim_{n \to \infty} y_n^+ = y^+}$, 
		${\lim_{n \to \infty} c_n(\xi + \delta) = s(\xi + \delta)}$, and arguing as above, we arrive at
		$s(\xi+)=y^+$.
		
	\medskip
	
	Subcase (2b): We consider a subsequence of   $\{\chi_n\}_{n \in \N}$ with $y_n \in  F(\xi+)$, which we denote again by $\{\chi_n\}_{n \in \N}$. 
	We have  $(y_n^-, y_n) \in \Pair{F(\xi_n^-)}{F(\xi)} $. By Lemma~\ref{Lemma_Help} $(y_n^-, y_n) \in \Pair{F(\xi_n^-)}{F(\xi+)}$ and by Lemma~\ref{Lemma_Limit_of_MP_is_MP} $(y^-, s(\xi) ) \in \Pair{F(\xi-)}{F(\xi+)}$.
	Here we have to show that $s(\xi-)=y^-$, $s(\xi+)=s(\xi)$. The proof is similar to the proof of subcase (2a).
	
	To show that $s(\xi-)=y^-$ we take a small fixed $\delta > 0$ and estimate for large $n$ 
	\begin{equation}\label{Formula_22} \nonumber
		\begin{split}
			|s(\xi-\delta)-y^{-}| & \le
			|s(\xi-\delta)-c_n(\xi-\delta)|+|c_n(\xi-\delta) - c_n(\xi_n^-)|+|c_n(\xi_n^-) - y^{-}| \\&
			\le |s(\xi-\delta)-c_n(\xi-\delta)|+ \Var{\xi-\delta}{\xi_n^-}{c_n} +|y_n^{-}-y^{-}|.
		\end{split}
	\end{equation}
		To arrive at $ s(\xi-)=y^-$, we use the estimate $V_{\xi-\delta}^{\xi_n^-}(c_n) \le  \NewLeftLocalModul{v_F}{\xi}{\delta + |\chi_n|}$, obtained by Lemma~\ref{Lemma_Var(chain)} (ii), the facts $\lim_{n \to \infty} c_n(\xi - \delta) = s(\xi - \delta)$, $\lim_{n \to \infty} y_n^- = y^-$ and arguments as above.

	
	 Finally, to show that $s(\xi+)=s(\xi)$,  we take a small fixed $\delta > 0$ and estimate for large $n$
		\begin{equation} \nonumber
			\begin{split}
				|s(\xi + \delta)-s(\xi)| & \le
				|s(\xi + \delta) - c_n(\xi + \delta)|+|c_n(\xi + \delta) - c_n(\xi_n^+)| + |c_n(\xi_n^+) - c_n(\xi)| + |c_n(\xi)-s(\xi)|\\&
				\le |s(\xi + \delta)-c_n(\xi + \delta)|+ V_{\xi_n^+}^{\xi + \delta}(c_n) + |y_n^+ - y_n|  + |c_n(\xi) - s(\xi)|.
			\end{split}
		\end{equation}
		By  Lemma~\ref{Lemma_Var(chain)} (i) $V_{\xi_n^+}^{\xi + \delta}(c_n) \le \NewRightLocalModul{v_F}{\xi}{\delta}$.
		Since $(y_n, y_n^+) \in \Pair{F(\xi)}{F(\xi_n^+)}$, by Lemma~\ref{Lemma_Help} $(y_n, y_n^+) \in \Pair{F(\xi+)}{F(\xi_n^+)}$. By~\eqref{haus_MetrPair} and Result~\ref{Lemma_QuasiMod_f<=QuasiMod_v_f}
		$$
		|y_n^+ - y_n| \le \haus (F(\xi_n^+), F(\xi+)) \le \NewRightLocalModul{F}{\xi}{|\chi_n|} 
		\le \NewRightLocalModul{v_F}{\xi}{|\chi_n|}.
		$$
		Now we use the facts that $\lim_{n \to \infty} c_n(\xi + \delta) = s(\xi + \delta)$, $\lim_{n \to \infty} c_n(\xi) = s(\xi)$ and the arguments as above to arrive at $s(\xi+)=s(\xi)$.
\end{proof}

	
\end{document}